\numberwithin{equation}{section}
\theoremstyle{plain}
\newtheorem{theorem}{Theorem}[section]
\newtheorem{lemma}[theorem]{Lemma}
\newtheorem{proposition}[theorem]{Proposition}
\theoremstyle{definition}
\newtheorem{definition}[theorem]{Definition}
\theoremstyle{remark}
\newtheorem{assumption}[theorem]{Assumption}
\newcommand{\lmt}[2]{\mathop{\lim}_{{#1} \rightarrow {#2}} }
\newcommand{\lip}[1]{{\mathrm{lip}}({#1})}
\newcommand{\esup}[1]{{\mathrm{ess~sup}}{#1}}
\newcommand{\lmti}[2]{\mathop{\underline{\lim}}_{{#1} \rightarrow {#2}} }
\newcommand{\Ric}{{\rm{Ricci}}}
\newcommand{\bRic}{{\bf Ricci}}
\renewcommand{\H}{{\mathrm{H}}}
\newcommand{\Hess}{{\mathrm{Hess}}}
\newcommand{\mm}{\mathfrak m}
\newcommand{\ms}{(X,\d,\mm)}
\newcommand{\rcdkn}{{\rm RCD}^*(K, N)}
\newcommand{\rcd}{{\rm RCD}(K, \infty)}
\newcommand{\M}{\mathbb{M}}
\newcommand{\N}{\mathbb{N}}
\newcommand{\V}{\mathbb{V}}
\newcommand{\R}{\mathbb{R}}
\newcommand{\supp}{\mathop{\rm supp}\nolimits}   
\newcommand{\Lip}{\mathop{\rm Lip}\nolimits}
\renewcommand{\d}{{\mathrm d}}
\newcommand{\dt}{{\d t}}
\newcommand{\ddt}{{\frac \d\dt}}
\newcommand{\D}{{\mathrm D}}
\newcommand{\restr}[1]{\lower3pt\hbox{$|_{#1}$}}
\newcommand{\la}{{\langle}}
\newcommand{\ra}{{\rangle}}
\newcommand{\nchi}{{\raise.3ex\hbox{$\chi$}}}
\title{\large{{\bf New characterizations of Ricci curvature on {\rm RCD} metric measure spaces}}
}
\begin{document}

\author{Bang-Xian Han \thanks
{University of Bonn,  Institute for Applied Mathematics,  han@iam.uni-bonn.de}
}

\date{\today} 
\maketitle

\begin{abstract}
We prove that on  a  large family of  metric measure spaces, if the  $L^p$-gradient estimate for heat flows holds for some $p>2$, then the $L^1$-gradient estimate also holds.   This result extends Savar\'e's result on metric measure spaces, and   provides  a new proof to  von Renesse-Sturm theorem on smooth metric measure spaces. As a consequence, we propose a new analysis object based on Gigli's measure-valued Ricci tensor, to characterize the Ricci curvature of {\rm RCD} space in a local way.

The argument is a new iteration technique  based on  non-smooth Bakry-\'Emery theory,  which is a new method to study the curvature dimension condition of metric measure spaces.
\end{abstract}

\textbf{Keywords}: Bakry-\'Emery theory, curvature dimension condition, gradient estimate, heat flow, metric measure space,  Ricci curvature\\

\tableofcontents
\section{Introduction}
 For any smooth Riemannian manifold $M$ and any $K\in \R$, it is proved by von Renesse and Sturm in \cite{SVR-T} that the following properties are equivalent
\begin{itemize}
\item [1)] $\Ric_M \geq K$,
\item [2)] there exists  $p\in (1, \infty)$ such that for all $f\in C^\infty_c(M)$, all $x\in M$ and $t\geq 0$
 \begin{equation}\label{eq:intro-0}
|\D \H_t f|^p(x) \leq e^{-pKt}\H_t|\D f|^p(x),
\end{equation}
\item [3)] for all $f\in C^\infty_c(M)$, all $x\in M$ and $t\geq 0$
\begin{equation}\label{eq:intro-00}
|\D \H_t f|(x) \leq e^{-Kt}\H_t|\D f|(x),
\end{equation} 
\end{itemize}
where $\H_t f$ is the solution to the heat equation with initial datum $f$.

\bigskip

In non-smooth setting, the notions of synthetic Ricci curvature bounds, or non-smooth curvature-dimension conditions, were proposed by Lott-Villani and Sturm (see \cite{Lott-Villani09} and \cite{S-O1}) using optimal transport theory. Later on, by assuming the infinitesimally Hilbertianity (i.e. the Sobolev space $W^{1,2}$ is a Hilbert space),  {\rm RCD} condition (or $\rcd$  condtion  to emphasize the curvature) which is a refinement of Lott-Sturm-Villani's curvature-dimension condition,  was proposed by Ambrosio-Gigli-Savar\'e (see \cite{AGS-M} and \cite{AGMR-R}). It is known that $\rcd$  spaces are generalizations of Riemannian manifolds with lower Ricci curvature bound and their limit spaces, as well as Alexandrov spaces with lower curvature bound.

Is is known that Lott-Sturm-Villani's synthetic Ricci bound and 2-gradient estimate (for heat flows) are equivalent in non-smooth setting. 
Let $\ms$ be a $\rcd$ space, it is  proved in \cite{AGS-M} that 
\begin{equation}\label{eq:intro-1}
|\D \H_t f|^2 \leq e^{-2Kt}\H_t|\D f|^2,~~\mm-\text {a.e.}
\end{equation}
for any $f\in W^{1,2}$ and $t>0$, where $\H_t f$ is the heat flow from $f$ and $|\D f|$ is the minimal weak upper gradient (or weak gradient for simplicity) of $f$. In particular, by H\"older inequality we know 
\begin{equation}\label{eq:intro-2}
|\D \H_t f|^p \leq e^{-pKt}\H_t|\D f|^p,~~\mm-\text {a.e.}
\end{equation}
for any $p\geq 2$. 
Furthermore, it is proved in \cite{S-S} that inequality \eqref{eq:intro-1} can be improved as
\begin{equation}\label{eq:intro-3}
|\D \H_t f| \leq e^{-Kt}\H_t|\D f|,~~\mm-\text {a.e.}.
\end{equation}
In conclusion, inequality \eqref{eq:intro-2}  holds for any $p\in [1, \infty]$.

\bigskip

Conversely, it is shown in \cite{AGS-B} that a space satisfying inequality \eqref{eq:intro-1} is  $\rcd$.  Let $\ms$ be an infinitesimally Hilbertian space, we have a well-defined Dirichlet energy:
\[
E(f):=\frac12 \int |\D f|^2\,\d \mm
\]
for any $f\in W^{1,2}\ms$.  We denote the $L^2$-gradient flow of $E(\cdot)$ starting from $f$ by $(\H_t f)_t$. Assume further that the space $\ms$ has  Sobolev-to-Lipschitz property: for any function $f\in W^{1,2}$  with$|\D f| \in L^\infty$, we can find a Lipschitz continuous function  $\bar{f}$ such that $f=\bar{f}$ $\mm$-a.e. and  $\Lip (\bar{f})=\esup ~{|\D f|}$. If 
\begin{equation}\label{eq:intro-4}
|\D \H_t f|^2 \leq e^{-2Kt}\H_t|\D f|^2,~~\mm-\text {a.e.}
\end{equation}
for any $f\in W^{1,2}$ and $t>0$, then  $\ms$ is  $\rcd$.

\bigskip

The main goal of this paper is to prove that  for any $p>2$, $p$-gradient estimate  \eqref{eq:intro-2} can also characterize the curvature-dimension condition  of metric measure spaces. We prove a non-smooth version of  $2) \Rightarrow 3)$  in von Renesse-Sturm's result, thus we complete the circle $1)\Leftrightarrow 2) \Leftrightarrow 3)$ in non-smooth setting. 

\bigskip

Now, we introduce our main result in this paper. When $p=2$, it is proved in \cite{S-S} that  there  exists a space of test functions ${\rm TestF}\ms$ which is a dense subspace of $ W^{1,2}(X)$ defined as
\[
{\rm TestF}\ms:= \Big\{f \in {\rm D} ({\bf \Delta}) \cap L^\infty: |\D f|\in L^\infty~~ {\rm and}~~~ \Delta f  \in W^{1,2} \cap L^\infty(X,\mm)\Big\},
\]
such that ${\bf \Delta} |\D f|^2 $ is a well-defined measure (see Definition \ref{def-mlaplacian})  for any $f\in {\rm TestF}$. So it is reasonable to the following assumption ({\bf Assumption} \ref{assumption-2},   see a similar assumption in \cite{S-R}): there exists a dense subspace $\mathcal{A}$ in ${\rm TestF}$  with respect to the graph norm
\[
f\mapsto \Big [ \| (-\Delta)^{\frac 32} f\|^2_{L^2}+\|f\|^2_{W^{1,2}} \Big ]^\frac 12=\Big [  E(\Delta f)+\|f\|^2_{W^{1,2}} \Big ]^\frac12
\]
such that $|\D f|^2 \in \M_\infty$ for any $f\in\mathcal{A}$. We remark that we do not need to assume the density of $\mathcal{A}$ in $W^{1,2}$.

\begin{theorem}[Theorem \ref{th-improvedBE}, Improved Bakry-\'Emery theory]\label{theorem}
Let $M:=\ms$ be a metric measure space such that there exists an algebra $\mathcal{A}$ as described above. If for any $f\in W^{1,2}(X)\cap \Lip(X)\cap L^\infty(X)$ we have the gradient estimate
\begin{equation}\label{eq:th-1}
|\D \H_t f|^p \leq e^{-pKt}\H_t|\D f|^p,~~\mm-\text {a.e.}
\end{equation}
for some $p\in (1, \infty)$. Then \eqref{eq:th-1} holds for $p=1$. In particular, $M$ is a $\rcd$ space.
\end{theorem}

Since we do not have second order differentiation formula for relative entropy along Wasserstein geodesics, or Taylor's expansion in non-smooth setting, we can not simply use the argument in smooth metric measure space (see the proofs in \cite{SVR-T}). The argument we adopt here is the so-called `self-improvement' method in Bakry-\'Emery's $\Gamma$-calculus, which was used in \cite{S-S} to deal with the non-smooth problems.  We remark that we not only use  `self-improvement' technique, but an improved iteration method based on this technique. We believe that this method also has potential application in the future.

\bigskip

It can be seen that Assumption \ref{assumption-2} is satisfied in the following cases, where we can apply our main result.

\underline{ Example 1.} Smooth metric measure space:  obviously,  $C_c^\infty(M)$, the space of smooth functions with compact support is a good algebra in Assumption \ref{assumption-2}. Hence we obtain a new quick proof to von Renesse-Sturm's theorem, without using Taylor's expansion method.

\underline{ Example 2.} $\rcd$ metric measure space: it is proved in Lemma 3.2 \cite{S-S}  that $|\D f|^2\in \M_\infty$ for any $f\in {\rm TestF}$.  By Theorem 1.1 we obtain the following  proposition which deals with the optimal comstant $K$ in the curvature-dimension condition. It is also a complement to Savar\'e's result in \cite{S-S}. 

\begin{proposition}[Self-improvement of gradient estimate]
Let $\ms$ be a $\rcd$ metric measure space. If for any $f\in W^{1,2}\cap \Lip(X)\cap L^\infty(X) $ we have the gradient estimate
\begin{equation}
|\D \H_t f|^p \leq e^{-pK't}\H_t|\D f|^p,~~\mm-\text {a.e.}
\end{equation}
for some $p\in [1, \infty)$ and $K' >K$. Then $\ms$ is a ${\rm RCD}(K', \infty)$ space. In particular, we know
\begin{equation}
|\D \H_t f| \leq e^{-K't}\H_t|\D f|,~~\mm -\text {a.e.}.
\end{equation}
\end{proposition}

\bigskip
In \cite{G-N}, Gigli defines measure valued Ricci tensor on ${\rm RCD}$ metric measure space (see also \cite{H-R}) as
\[
\bRic(\nabla f, \nabla f):={\bf \Gamma}_2(f)-|\Hess[f]|^2_{\rm HS}\,\mm
\]
where ${\bf \Gamma}_2(f):=\frac12 {\bf \Delta} |\D f|^2-\la \nabla f, \nabla \Delta f\ra\,\mm$ and $|\Hess[f]|_{\rm HS}$ is the Hilbert-Schmidt norm of the Hessian $\Hess[f]$ as a module (see \cite{G-N} for details). He shows that $\bRic(\nabla f, \nabla f)\geq K|\D f|^2\,\mm$ if and only if the space is $\rcd$. However, we do not know if $\bRic$ has locality in the sense that $\bRic(\nabla f, \nabla f)\restr{\{|\D f|=0\}}=0$. 

From the proof of Theorem \ref{theorem} we have the following new characterization of curvature bound which extends Gigli's result:
\begin{proposition}[Proposition \ref{prop-last}]
Let $\ms$ be a {\rm RCD} space.  For any $f$ such that $\bRic(\nabla f, \nabla f)$ is well-defined, we denote the Lebesgue decomposition of $\bRic(\nabla f, \nabla f)$ with respect to $\mm$ by
$$\bRic(\nabla f, \nabla f)=\Ric_{ac}(\nabla f, \nabla f)\,\mm+\bRic_{sing}(\nabla f, \nabla f).$$ Then the following characterizations are equivalent. 
\begin{itemize}
\item [1)] $\ms$ is $\rcd$,
\item [2)] for any test function $f\in {\rm TestF}$ we have 
   $\bRic(\nabla f, \nabla f)\geq K|\D f|^2\,\mm$ in the sense that 
   \[
   \Ric_{ac}(\nabla f, \nabla f) \geq  K|\D f|^2~~~\mm-\text{a.e.}
   \]
   and $\bRic_{sing}(\nabla f, \nabla f) \geq 0$,
\item [3)] for any test function $f\in {\rm TestF}$ we have  
$$|\D f|^2  \Ric_{ac}(\nabla f, \nabla f) \geq  K|\D f|^4~~~\mm-\text{a.e.}$$  and $\bRic_{sing}(\nabla f, \nabla f) \geq 0$.
\end{itemize}

\end{proposition}
We remark that this naive extension is non-trivial, because 2) is not a direct consequence of 3) due to lack of the  locality of $\bRic(\cdot, \cdot)$. From this proposition, we know that  $\overline{\Ric}(\nabla f, \nabla f):=|\D f|^2 \Ric_{ac}(\nabla f, \nabla f)\,\mm$ characterizes the Ricci curvature of $\ms$ and ${\overline{\Ric}}$ has locality in the sense that $$\overline{\Ric}(\nabla f, \nabla f)\restr{\{|\D f|=0\}}=0.$$
\section{Preliminaries}  
  First of all, we summarize the  basic hypothesis on the metric measure space $\ms$ below in Assumption \ref{assumption-1} below,   the notions and concepts in in this assumption will be explained later. 
\begin{assumption}\label{assumption-1}
We assume that:
\begin{itemize}
\item [(1)] $(X, \d)$ is a complete, separable geodesic space,
\item [(2)] $\supp \mm=X,~~~~\mm(B_r(x)) < c_1\exp{(c_2 r^2)}~~~\text{for every}~~r>0$,
\item [(3)] $W^{1,2}(X)$ is a Hilbert space,
\item [(4)] $\ms$ has Sobolev-to-Lipschitz property,
\item [(5)] there exits a unique heat kernel $p_t(x, y)$.
\end{itemize}
\end{assumption}

The Sobolev space $W^{1,2}\ms$ is defined as in \cite{AGS-C}. We say that $f\in L^2(X, \mm)$ is a Sobolev function in $W^{1,2}\ms$ if there exists a sequence of Lipschitz functions  $(f_n) \subset L^2$,  such that $f_n \to f$ and $\lip{f_n} \to G$ in $L^2$ for some $G \in L^2(X, \mm)$, where $\lip{f_n} $ is the local Lipschitz constant of $f_n$. It is known that there exists a minimal function $G$ in $\mm$-a.e. sense. We call the minimal $G$ the minimal weak  upper gradient (or weak gradient for simplicity) of the function $f$, and denote it by $|\D f|$. It is known that the locality holds for $|\D f|$, i.e. $|\D f|=|\D g|$ a.e. on the set $\{ f=g\}$. Furthermore, we have the lower semi-continuity: if $\{f_n\}_n\subset W^{1,2}\ms$ is a sequence converging to some $f$ in $\mm$-a.e. sense and  $(|\D f_n|)_n$ is bounded in $L^2(X,\mm)$, then $f\in W^{1,2}\ms$ and
\[
\||\D f|\|_{L^2}\leq \lmti{n}{\infty}\||\D f_n|\|_{L^2}.
\]

We equip $W^{1,2}\ms$ with the norm
\[
\|f\|^2_{W^{1,2}\ms}:=\|f\|^2_{L^2(X,\mm)}+\||\D f|\|^2_{L^2(X,\mm)}.
\]
We say that $\ms$ is an infinitesimally Hilbertian space if $W^{1,2}$ is a Hilbert space (see \cite{AGS-M}, \cite{G-O} for more discussions). 

On an infinitesimally Hilbertian space, we have a natural `carr\'e du champ' operator $\Gamma(\cdot, \cdot): [W^{1,2}\ms]^2 \mapsto L^1\ms$  defined by
\[
\Gamma(f, g):= \frac14 \Big{(}|\D (f+g)|^2-|\D (f-g)|^2\Big{)}.
\]
It can be seen that $\Gamma(\cdot, \cdot)$ is symmetric, bilinear and continuous. We  denote $\Gamma(f,f)$ by $\Gamma(f)$. We  have the following chain rule and Leibnitz rule (Lemma 4.7 and Proposition 4.17 in \cite{AGMR-R}, see also Corollary 7.1.2 in \cite{BH-D})
\[
\Gamma(\Phi(f), g)=\Phi'(f)\Gamma(f,g)~~\text{for every}~f,g\in W^{1,2},~~\Phi\in \Lip{(\R)}, \Phi(0)=0
\]
and
\[
\Gamma(fg,h)=f\Gamma(g,h)+g\Gamma(f,h)~~\text{for every}~f,g,h\in W^{1,2}\cap L^\infty.
\]

We say that   a metric measure space $M=(X ,\d, \mm)$ has  Sobolev-to-Lipschitz property if: for any function $f\in W^{1,2}$  with $|\D f| \in L^\infty$, we can find a Lipschitz continuous function  $\bar{f}$ such that $f=\bar{f}$ $\mm$-a.e. and  $\Lip (\bar{f})=\esup ~{|\D f|}$.

We define the Dirichlet (energy) form $E: L^2 \mapsto [0,\infty]$ by
\[
E(f):=\frac12 \int \Gamma(f)\,\d\mm.
\]
It is proved (see \cite{AGS-C, AGS-D}) that  Lipschitz functions are dense in energy: for any $f\in W^{1,2}$ there is a sequence of Lipschitz functions $(f_n)_n\subset  L^2(X,\mm)$ such that $f_n \rightarrow f$ and $\lip{ f_n} \rightarrow |\D f|$ in $L^2$. Moreover, if $W^{1,2}$ is Hilbert we know Lipschitz functions are dense (strongly) in $W^{1,2}$.

It can be proved that $E$ is a strongly local, symmetric, quasi-regular Dirichlet form (see \cite{AGS-B, AGS-C, AGS-M}). The Markov semigroup $(\H_t)_{t\geq 0}$ generated by $E$ is called the heat flow.  There exists heat kernel which  is a family of functions $p_t(x, y): X\times X \times \R \mapsto \R$ such that $p_t(x, y)\,\d \mm(y)$ is a probability measure for any $x\in X, t\in \R$, and $\H_t f(x)=\int f(y) p_t(x,y)\,\d \mm(y)$  for any $f\in L^2(X, \mm)$.

 For any $f\in L^2(X, \mm)$ we know that $(0, \infty)\ni t \mapsto \H_t f \in L^2\cap D(\Delta)$ satisfies
 \[
\ddt \H_t f=\Delta \H_t f~~\forall t \in (0,\infty),
\]
and
\[
\lmt{t}{0} \H_t f=f~~\text {in}~L^2.
\]
Here the Laplacian is defined  in the following way (see \cite{G-O} for the compatibility of different definitions of Laplacian):
\begin{definition}[Measure valued Laplacian, \cite{G-O, G-N, S-S}]\label{def:laplacian} 

The domain of the Laplacian ${\rm D}({\bf \Delta}) \subset  W ^{1,2}$ consists of $f \in  W ^{1,2} $ such that there is a measure ${\bf \mu}\in {\rm Meas}(M)$ satisfying
\[
\int \varphi \,{\mathbf \mu}= -\int \Gamma (\varphi,  f ) \,   \mm, \forall \varphi: M \mapsto  \R, ~~ \text{Lipschitz with bounded support}.
\]
In this case the measure $\mu$ is unique and we denote it by ${\bf \Delta} f$. If ${\bf \Delta} f \ll m$, we denote its density  with respect to $\mm$ by $\Delta f$.
\end{definition}

We define  ${\rm TestF}\ms \subset W^{1,2}\ms$, the space of test functions  as
\[
{\rm TestF}\ms:= \Big\{f \in {\rm D} ({\bf \Delta}) \cap L^\infty: |\D f|\in L^\infty~~ {\rm and}~~~ \Delta f  \in W^{1,2} \cap L^\infty(X,\mm)\Big\}.
\]
It is known from \cite{S-S} and \cite{AGS-M} that ${\rm TestF}(M)$ is an algebra and it is  dense in $W^{1,2}\ms$ when $\ms$ is a ${\rm RCD}$ metric measure space.  We will see in Lemma \ref{lemma:density}  that 
 ${\rm TestF}$ is  dense in $W^{1,2}$ even when $L^p$-gradient estimate for heat flow holds for some $p>2$.

\begin{lemma}[Lemma 3.2, \cite{S-S}]\label{lemma:cv}
Let $M=\ms$ be a metric measure space satisfying Assumptions \ref{assumption-1}. Assume that the algebra generated by $\{f_1, ... , f_n \}\subset {\rm TestF}(M)$ is included in ${\rm TestF}(M)$.  Let $\Phi \in C^\infty(\R^n)$ with $\Phi(0)=0$. Put $ {\bf f}=(f_1, ... , f_n)$,
then $\Phi({\bf f}) \in {\rm TestF}(M)$. 
\end{lemma}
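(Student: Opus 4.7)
The plan is to establish, for $\Phi \in C^\infty(\R^n)$ with $\Phi(0)=0$, the chain-rule identities
\begin{align*}
\Gamma(\Phi({\bf f})) &= \sum_{i,j} \partial_i\Phi({\bf f})\,\partial_j\Phi({\bf f})\,\Gamma(f_i,f_j), \\
\Delta \Phi({\bf f}) &= \sum_i \partial_i\Phi({\bf f})\,\Delta f_i + \sum_{i,j}\partial_i\partial_j\Phi({\bf f})\,\Gamma(f_i,f_j),
\end{align*}
and to read off membership in ${\rm TestF}$ from the bounds they provide. I would first prove both identities for polynomials $P$ with $P(0)=0$, where they follow by iterated application of the one-variable chain rule and the Leibniz rule stated in the preliminaries, and where $P({\bf f}) \in {\rm TestF}$ is exactly the algebra assumption. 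The extension to general $\Phi$ is then by polynomial approximation.

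Since each $f_i\in L^\infty$, the vector ${\bf f}$ takes values a.e.\ in the compact rectangle $K := \prod_i [-\|f_i\|_\infty,\|f_i\|_\infty]$. I would choose polynomials $P_k$ with $P_k(0)=0$ converging to $\Phi$ in $C^2(K)$; such a sequence exists by classical multivariate Weierstrass approximation (e.g.\ tensor-product Bernstein polynomials converge in every $C^j$-norm on compact boxes when the target is smooth). Then $\partial^\alpha P_k({\bf f}) \to \partial^\alpha\Phi({\bf f})$ in $L^\infty$ for every $|\alpha|\le 2$, so the two right-hand sides above built from $P_k$ converge $\mm$-a.e.\ and in $L^\infty$ to the corresponding right-hand sides built from $\Phi$.

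To identify ${\bf\Delta}\Phi({\bf f})$ with the candidate $\Psi := \sum_i \partial_i\Phi({\bf f})\Delta f_i + \sum_{i,j}\partial_i\partial_j\Phi({\bf f})\Gamma(f_i,f_j)$, I would test against an arbitrary Lipschitz $\varphi$ with bounded support: by Definition~\ref{def:laplacian} and bilinearity of $\Gamma$,
\[
\int \varphi\,\Delta P_k({\bf f})\,\d\mm = -\int \Gamma(\varphi, P_k({\bf f}))\,\d\mm = -\sum_i\int \partial_i P_k({\bf f})\,\Gamma(\varphi, f_i)\,\d\mm.
\]
Dominated convergence (using the uniform $L^\infty$-bound on $\partial_i P_k({\bf f})$ together with $\Gamma(\varphi,f_i)\in L^1$) passes $k\to\infty$: the right-hand side tends to $-\int \Gamma(\varphi,\Phi({\bf f}))\,\d\mm$ via the Sobolev chain rule (valid because $\Phi$ is Lipschitz on $K$ and $\Phi({\bf f})\in W^{1,2}$ by lower semicontinuity applied to $P_k({\bf f})$), while the left-hand side tends to $\int\varphi\Psi\,\d\mm$. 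This gives $\Phi({\bf f}) \in {\rm D}({\bf\Delta})$ with ${\bf\Delta}\Phi({\bf f})=\Psi\,\mm$.

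Verifying that $\Phi({\bf f})\in{\rm TestF}$ is then bookkeeping: $\Phi({\bf f})\in L^\infty$ from compactness of the image; $|D\Phi({\bf f})|\in L^\infty$ from the $\Gamma$-formula and $|Df_i|\in L^\infty$; $\Psi\in L^\infty$ from the product structure; and $\Psi\in W^{1,2}$ by differentiating each summand via Leibniz, using that $D(\partial_i\Phi({\bf f}))$ and $D(\partial_i\partial_j\Phi({\bf f}))$ are bounded by one more application of the chain rule on $K$, while $D\Delta f_i$ and $D\Gamma(f_i,f_j)$ are in $L^2$ (the latter follows from $f_if_j\in{\rm TestF}$ and the polarization $2\Gamma(f_i,f_j)=\Delta(f_if_j)-f_i\Delta f_j-f_j\Delta f_i$, which places $\Gamma(f_i,f_j)$ itself in $W^{1,2}$). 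The main obstacle is the simultaneous $C^2$-polynomial approximation combined with the delicate limit passage in the measure-valued Laplacian: one needs enough regularity of the approximants to control both $\partial_i P_k({\bf f})$ and $\partial_i\partial_j P_k({\bf f})$ because both appear in the formula for $\Delta P_k({\bf f})$, and one must confirm that the polynomial-level identities survive the weak limit defining ${\bf\Delta}\Phi({\bf f})$.
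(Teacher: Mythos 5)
Your proposal is correct in outline and takes the natural route: since each $f_i$ is bounded, $\Phi$ only matters on the compact box $K$, so one reduces to polynomials (where membership in ${\rm TestF}$ is the algebra hypothesis and the chain rules follow from the Leibniz rule by iteration), and transports the identities to $\Phi$ by $C^2$-approximation. That is essentially the mechanism behind Savar\'e's Lemma~3.2, and the ``bookkeeping'' step — bounding $\Gamma(\Phi(\mathbf f))$, showing $\Psi\in L^\infty$, and placing $\Psi$ in $W^{1,2}$ via polarization to get $\Gamma(f_i,f_j)\in W^{1,2}$ — is carried out correctly.

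One step deserves to be tightened. When you pass $k\to\infty$ in
\[
-\int \Gamma(\varphi, P_k(\mathbf f))\,\d\mm \;=\; -\sum_i\int \partial_i P_k(\mathbf f)\,\Gamma(\varphi,f_i)\,\d\mm,
\]
dominated convergence on the right gives $-\sum_i\int \partial_i\Phi(\mathbf f)\,\Gamma(\varphi,f_i)\,\d\mm$, but identifying that quantity with $-\int\Gamma(\varphi,\Phi(\mathbf f))\,\d\mm$ invokes a multivariate Lipschitz chain rule for $\Gamma$ which is not among the stated preliminaries (only the one-variable chain rule is). You can avoid it entirely: $P_k(\mathbf f)\to\Phi(\mathbf f)$ in $L^2$ with $\|P_k(\mathbf f)\|_{W^{1,2}}$ uniformly bounded, and since $W^{1,2}$ is a Hilbert space this upgrades to weak $W^{1,2}$ convergence; the map $g\mapsto\int\Gamma(\varphi,g)\,\d\mm$ is a bounded linear functional when $\varphi$ is Lipschitz with bounded support, so $\int\Gamma(\varphi,P_k(\mathbf f))\,\d\mm\to\int\Gamma(\varphi,\Phi(\mathbf f))\,\d\mm$ directly. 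Combined with the $L^\infty$-convergence of $\Delta P_k(\mathbf f)$ to $\Psi$, this identifies $\mathbf\Delta\Phi(\mathbf f)=\Psi\,\mm$ with no appeal to an unstated chain rule, and the $\Gamma$-formula for $\Phi(\mathbf f)$ can be obtained afterwards from the same weak-convergence argument or by polarization from the Laplacian identity.

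A smaller remark: after replacing $P_k$ by $P_k-P_k(0)$ you should note explicitly that $0\in K$, so this normalization does not disturb $C^2$-convergence on $K$ and keeps $P_k(\mathbf f)\in L^2$; this is what makes the lower-semicontinuity step and the $L^2$-convergence legitimate even when $\mm$ is infinite.
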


Let $f \in {\rm TestF}(M)$. We define the Hessian $\Hess[f](\cdot, \cdot): \{{\rm TestF}(M)\}^2 \mapsto L^0(M)$ by
\[
2\Hess{[f]}( g,h)=\Gamma ( g,  \Gamma ( f,  h )) +\Gamma(h,  \Gamma(f, g))-\Gamma( f, \Gamma( g, h)).
\]
We have the following lemma.

\begin{lemma}[Chain rules, \cite{B-H}, \cite{S-S}]\label{lemma:chain}
Let $f_1, ... , f_n \in {\rm TestF}(M)$ and $\Phi \in C^\infty(\R^n)$  with $\Phi(0)=0$. Assume that the algebra generated by $\{f_1, ... , f_n \}\subset {\rm TestF}(M)$ is included in ${\rm TestF}(M)$. Put $ {\bf f}=(f_1, ... , f_n)$, then
\[
|\D \Phi({\bf f})|^2\,\mm=\mathop{\sum}_{i,j=1}^n \Phi_i\Phi_j ({\bf f}) \Gamma (f_i,  f_j )\,\mm,
\]
and
\[
{\bf \Delta} \Phi({\bf f})=\mathop{\sum}_{i=1}^n \Phi_i({\bf f}){\bf \Delta} f_i+\mathop{\sum}_{i,j=1}^n \Phi_{ij}({\bf f}) \Gamma  (f_i,  f_j ) \,\mm.
\]

\end{lemma}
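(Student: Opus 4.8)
The plan is to bootstrap both identities from the single-variable chain rule $\Gamma(\Phi(f),g)=\Phi'(f)\Gamma(f,g)$ and the Leibniz rule $\Gamma(uv,h)=u\,\Gamma(v,h)+v\,\Gamma(u,h)$ recalled in the Preliminaries, via a polynomial-approximation argument in which Lemma \ref{lemma:cv} keeps every intermediate function inside ${\rm TestF}(M)$. Throughout, fix $R>0$ with $\|f_i\|_{L^\infty}\le R$ for all $i$, so that ${\bf f}$ takes values in the cube $Q_R:=[-R,R]^n$, and write $\|\cdot\|_\infty$ for the supremum norm on $Q_R$. First I would prove the bilinear chain rule
\[
\Gamma(\Phi({\bf f}),g)=\sum_{i=1}^n\Phi_i({\bf f})\,\Gamma(f_i,g)\qquad \mm\text{-a.e.}
\]
for every $g\in{\rm TestF}(M)$. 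For a polynomial $\Phi$ with $\Phi(0)=0$ this comes by induction on the degree: the linear case is bilinearity of $\Gamma$, and a monomial $x^\alpha$ with $|\alpha|\ge 2$ is written as $x_k\,x^{\alpha-e_k}$, where $x^{\alpha-e_k}$ still vanishes at the origin so $({\bf f})^{\alpha-e_k}\in{\rm TestF}(M)$, and the Leibniz rule together with the inductive hypothesis closes the step. For general $\Phi\in C^\infty(\R^n)$ I would take polynomials $P^{(k)}$ with $P^{(k)}(0)=0$, $\|P^{(k)}-\Phi\|_\infty\to 0$ and $\|\partial_iP^{(k)}-\Phi_i\|_\infty\to 0$: because $P^{(k)}(0)=\Phi(0)=0$, a first-order Taylor estimate bounds $|P^{(k)}({\bf f})-\Phi({\bf f})|$ by a vanishing multiple of $|{\bf f}|\in L^2$, so $P^{(k)}({\bf f})\to\Phi({\bf f})$ in $L^2$, while the polynomial case gives a uniform $L^2$-bound on $|\D P^{(k)}({\bf f})|$; lower semicontinuity of the minimal weak upper gradient then upgrades this to convergence in $W^{1,2}$, and one passes to the limit in the polynomial identity using continuity of $\Gamma$. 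Taking $g=\Phi({\bf f})\in{\rm TestF}(M)$ and applying the identity a second time to $\Gamma(f_i,\Phi({\bf f}))$ yields at once
\[
|\D\Phi({\bf f})|^2=\Gamma(\Phi({\bf f}))=\sum_{i,j=1}^n\Phi_i\Phi_j({\bf f})\,\Gamma(f_i,f_j)\qquad\mm\text{-a.e.},
\]
which is the first formula.

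Next I would treat the Laplacian identity. The first point is the measure-valued Leibniz rule ${\bf \Delta}(uv)=u\,{\bf \Delta}v+v\,{\bf \Delta}u+2\,\Gamma(u,v)\,\mm$ for $u,v\in{\rm TestF}(M)$, obtained straight from Definition \ref{def:laplacian}: for $\varphi$ Lipschitz with bounded support, expand $-\int\Gamma(\varphi,uv)\,\d\mm$ by the Leibniz rule for $\Gamma$, rewrite $-\int u\,\Gamma(\varphi,v)\,\d\mm=-\int\Gamma(u\varphi,v)\,\d\mm+\int\varphi\,\Gamma(u,v)\,\d\mm$ (valid since $u\varphi$, through the Lipschitz representative of $u$, is again Lipschitz with bounded support), identify $-\int\Gamma(u\varphi,v)\,\d\mm$ with $\int u\varphi\,{\bf \Delta}v$, and symmetrize in $u$ and $v$. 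Iterating this together with the bilinear chain rule gives, by induction on the degree, the Laplacian chain rule for polynomials $\Phi$ with $\Phi(0)=0$. For general $\Phi\in C^\infty(\R^n)$ I would take the approximants $P^{(k)}$ of the previous step, now also requiring $\|\partial_{ij}P^{(k)}-\Phi_{ij}\|_\infty\to 0$, and pass to the limit in the defining relation $\int\varphi\,{\bf \Delta}P^{(k)}({\bf f})=-\int\Gamma(\varphi,P^{(k)}({\bf f}))\,\d\mm$ (for $\varphi$ Lipschitz with bounded support): the right-hand side tends to $-\int\Gamma(\varphi,\Phi({\bf f}))\,\d\mm$ by the $W^{1,2}$-convergence already established, and the left-hand side, which equals $\sum_i\int\varphi\,\partial_iP^{(k)}({\bf f})\,{\bf \Delta}f_i+\sum_{i,j}\int\varphi\,\partial_{ij}P^{(k)}({\bf f})\,\Gamma(f_i,f_j)\,\d\mm$, tends to $\int\varphi\big(\sum_i\Phi_i({\bf f})\,{\bf \Delta}f_i+\sum_{i,j}\Phi_{ij}({\bf f})\,\Gamma(f_i,f_j)\,\mm\big)$ since $\varphi$ has bounded support and the first and second partials of $P^{(k)}$ converge uniformly on $Q_R$. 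Uniqueness of the measure in Definition \ref{def:laplacian} then gives the second formula.

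The hard part will be the polynomial-to-$C^\infty$ limit on a space of possibly infinite mass; what makes it work is exactly the interplay used above — boundedness of the $f_i$, which confines ${\bf f}$ to the fixed compact $Q_R$ and makes a uniform $C^2$-approximation of $\Phi$ there possible; the normalization $P^{(k)}(0)=\Phi(0)=0$, which converts uniform convergence of first derivatives into genuine $L^2$-convergence of $P^{(k)}({\bf f})$ to $\Phi({\bf f})$ (rather than mere uniform smallness, which would not give $L^2$-convergence when $\mm(X)=\infty$); and Lemma \ref{lemma:cv}, which keeps each $P^{(k)}({\bf f})$ and $\Phi({\bf f})$ in ${\rm TestF}(M)$ so that ${\bf \Delta}P^{(k)}({\bf f})$ and the defining relation of the measure-valued Laplacian are meaningful at every stage. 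Once these facts are secured, closedness of the Dirichlet form $E$ and the uniqueness in Definition \ref{def:laplacian} close the argument.
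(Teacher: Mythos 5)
The paper offers no proof of this lemma at all --- it is quoted from \cite{B-H} and \cite{S-S} --- and your reconstruction follows essentially the standard argument of those sources: prove both identities for polynomials vanishing at the origin by induction on the degree via the Leibniz and chain rules, with Lemma \ref{lemma:cv} (and the hypothesis that the algebra generated by the $f_i$ lies in ${\rm TestF}(M)$) keeping every intermediate object in ${\rm TestF}(M)$, then pass to general $\Phi\in C^\infty$ by uniform $C^2$-approximation on the cube $Q_R$, exploiting $\Phi(0)=0$ to turn uniform convergence of gradients into genuine $L^2$-convergence of the compositions. One step is stated imprecisely: a uniform $L^2$-bound on $|\D P^{(k)}({\bf f})|$ combined with lower semicontinuity yields only $\Phi({\bf f})\in W^{1,2}$ and a bound on its gradient, not the strong $W^{1,2}$-convergence you need in order to pass to the limit in $\Gamma(P^{(k)}({\bf f}),g)$. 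The correct mechanism is already available within your own scheme: apply the polynomial identity to the differences $P^{(k)}-P^{(m)}$ (which also vanish at the origin) to get
\[
\int \big|\D\big(P^{(k)}({\bf f})-P^{(m)}({\bf f})\big)\big|^2\,\d\mm \;\leq\; \max_i\big\|\partial_i P^{(k)}-\partial_i P^{(m)}\big\|^2_{L^\infty(Q_R)}\,\sum_{i,j=1}^n\big\|\Gamma(f_i,f_j)\big\|_{L^1},
\]
so that $\big(P^{(k)}({\bf f})\big)_k$ is Cauchy in $W^{1,2}$ and converges strongly to $\Phi({\bf f})$; continuity of $\Gamma$ then closes the first identity, and the same strong convergence justifies the limit on the right-hand side of the defining relation for ${\bf \Delta}\Phi({\bf f})$. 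With that replacement, plus the routine remark that in the terms $\Phi_i({\bf f})\,{\bf \Delta}f_i$ one must compose $\Phi_i$ with quasi-continuous representatives of the $f_i$ whenever ${\bf \Delta}f_i$ carries a singular part, your proof is complete and matches the cited one in substance.
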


The last lemma will be used in the proof of Theorem \ref{th-improvedBE}.

\begin{lemma}[Lemma 3.3.6, \cite{G-N}]\label{lemma:delta}
Let $\mu_i=\rho_i \,\mm+\mu_i^s$  be measures with $\mu_i^s \perp \mm$, $i=1,2,3$.  We assume that
\[
\lambda^2 \mu_1+2\lambda \mu_2 + \mu_3 \geq 0, ~~~~~\forall \lambda \in \R.
\]
Then we have
\[
\mu_1^s \geq 0,~~~\mu_3^s \geq 0
\]
and
\[
|\rho_2|^2 \leq \rho_1 \rho_3,~~~\mm-\text {a.e.}.
\]
\end{lemma}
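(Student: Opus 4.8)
The plan is to reduce the stated inequality between measures to a pointwise quadratic inequality between densities by passing to a common dominating measure, and then to invoke the elementary fact that a real polynomial $a\lambda^2+2b\lambda+c$ is nonnegative for all $\lambda\in\R$ if and only if $a\geq 0$, $c\geq 0$ and $b^2\leq ac$.

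First I would fix the reference measure $\sigma:=\mm+|\mu_1|+|\mu_2|+|\mu_3|$, where $|\cdot|$ denotes total variation; it is $\sigma$-finite because $\mm$ and the (signed Radon) measures $\mu_i$ are locally finite, so the Radon--Nikodym theorem provides $g_i\in L^1_{\rm loc}(\sigma)$ with $\mu_i=g_i\,\sigma$ and a bounded $0\leq w\in L^1_{\rm loc}(\sigma)$ with $\mm=w\,\sigma$. Since $\mm$ charges no subset of $\{w=0\}$, one has on $\{w>0\}$ the identity $\mathbf 1_{\{w>0\}}\sigma=\mathbf 1_{\{w>0\}}\tfrac1w\mm$, so that $\mu_i=\mathbf 1_{\{w>0\}}\tfrac{g_i}{w}\,\mm+\mathbf 1_{\{w=0\}}g_i\,\sigma$, the first term being absolutely continuous with respect to $\mm$ and the second concentrated on the $\mm$-negligible set $\{w=0\}$. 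By uniqueness of the Lebesgue decomposition this identifies, up to $\mm$-negligible sets, $\rho_i=g_i/w$ on $\{w>0\}$ and $\mu_i^s=\mathbf 1_{\{w=0\}}g_i\,\sigma$.

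Next, for each fixed $\lambda\in\R$ the hypothesis reads $(\lambda^2 g_1+2\lambda g_2+g_3)\,\sigma=\lambda^2\mu_1+2\lambda\mu_2+\mu_3\geq 0$, and since $\sigma\geq 0$ this forces $\lambda^2 g_1+2\lambda g_2+g_3\geq 0$ $\sigma$-a.e. Running $\lambda$ over $\mathbb Q$ and taking the union of the countably many exceptional sets, I obtain a single $\sigma$-negligible set off which $\lambda^2 g_1(x)+2\lambda g_2(x)+g_3(x)\geq 0$ holds for all rational $\lambda$, hence, by continuity of this polynomial in $\lambda$ and density of $\mathbb Q$, for all real $\lambda$. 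The elementary quadratic fact, applied pointwise, then yields $g_1\geq 0$, $g_3\geq 0$ and $g_2^2\leq g_1 g_3$ $\sigma$-a.e. Translating back through the identifications of the previous paragraph: $\mu_1^s=\mathbf 1_{\{w=0\}}g_1\,\sigma\geq 0$ and $\mu_3^s=\mathbf 1_{\{w=0\}}g_3\,\sigma\geq 0$, while on $\{w>0\}$ one has $|\rho_2|^2=g_2^2/w^2\leq g_1 g_3/w^2=\rho_1\rho_3$ $\sigma$-a.e., which is exactly $|\rho_2|^2\leq\rho_1\rho_3$ $\mm$-a.e. since $\mm$ is concentrated on $\{w>0\}$.

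The only genuinely delicate point is the bookkeeping with the Lebesgue decomposition — matching the densities and singular parts of the $\mu_i$ relative to $\mm$ with those coming from the common reference measure $\sigma$ — together with the order of quantifiers, namely the passage from ``for each fixed $\lambda$, the inequality holds $\sigma$-a.e.'' to ``$\sigma$-a.e., the inequality holds for all $\lambda$'', which is precisely what the countable-dense-set argument resolves. One could alternatively avoid the common reference measure by testing the inequality against nonnegative bounded functions of bounded support, deducing $\mu_1,\mu_3\geq 0$ (whence $\mu_1^s,\mu_3^s\geq 0$) directly and promoting the third scalar inequality to the pointwise statement on densities via a Cauchy--Schwarz argument over finite partitions; but the route above seems cleanest.
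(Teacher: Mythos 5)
The paper does not prove this lemma; it quotes it as Lemma~3.3.6 of Gigli's \cite{G-N}, so there is no in-paper argument to compare against. Your proof is correct and complete: passing to the common reference measure $\sigma=\mm+|\mu_1|+|\mu_2|+|\mu_3|$, identifying --- through the density $w$ of $\mm$ with respect to $\sigma$ and the uniqueness of the Lebesgue decomposition --- the absolutely continuous part of $\mu_i$ with $g_i/w$ on $\{w>0\}$ and the singular part with $g_i\,\sigma$ restricted to the $\mm$-null set $\{w=0\}$, swapping quantifiers by letting $\lambda$ range over a countable dense set and using continuity of the quadratic in $\lambda$, and finally applying the pointwise criterion ``$a\lambda^2+2b\lambda+c\geq0$ for all $\lambda$ iff $a\geq0$, $c\geq0$, $b^2\leq ac$'' together yield exactly the stated conclusions. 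This is the standard route for measure-valued quadratic inequalities of this type and matches the spirit of the argument in \cite{G-N}; there is nothing to flag.
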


\section{Main Results}
Firstly, we  discuss more about the measure-valued Laplacian.
Since $E$ is quasi-regular,  we know (see Remark 1.3.9 (ii), \cite{CF-S}) that every function $f\in W^{1,2}$ has an
quasi-continuous representative $\overline{f}$. And $\overline{f}$ is unique up to quasi-everywhere equality, i.e. if $\tilde{f}$ is another quasi-continuous representative, then $\tilde{f}=\overline{f}$ holds in a complement of an $E$-polar set. For more details,  see Definition 2.1 in \cite{S-S} and the references therein.


\begin{definition}\label{def-mlaplacian}
We define $\M_\infty$ the space of  $f\in {\rm D}({\bf \Delta})\cap L^\infty$ such  that there exists a measure decomposition ${\bf \Delta} f=\mu_+-\mu_-$ with $\mu_{\pm}$  in the positive cone in $(W^{1,2})'$, such that:
\[
\int \overline{\varphi} \,\d ({\bf \Delta}f)=-\int \Gamma(\varphi, f)\,\d \mm
\]
for any $\varphi\in W^{1,2}$ and the quasi-continuous representative $\overline{\varphi}\in L^1(X, {\bf \Delta}f)$.

In particular, every $E$-polar set is $({\bf \Delta} f)$-negligible and the measure  $\overline{\varphi}{\bf \Delta}f$ is well-defined.
\end{definition}

\bigskip

In the next lemma we study the measure ${\bf \Delta} \Gamma (f)^{\frac{p}2}$. Since $\Gamma (f)$ is not necessarily continuous, and $\Phi(x)=x^{\frac{p}2}$ is not $C^2(\R)$, we can not use Lemma \ref{lemma:chain} directly.

\begin{lemma}\label{lemma-1}
Let $\ms$ be a metric measure space satisfying Assumptions \ref{assumption-1}.
Let $f\in {\rm TestF}$ such that $\Gamma(f), \Gamma(f)^{\frac{p}2} \in \M_\infty$, $p> 2$. Then
\begin{equation}\label{eq:lemma1-1}
\frac1{p}{\bf \Delta} \Gamma (f)^{\frac{p}2}-\Gamma( f)^{\frac{p}2-1} \Gamma(\Delta f, f)\d \mm\geq K\Gamma(f)^{\frac{p}2}\d\mm
\end{equation}
if and only if
\begin{equation}\label{eq:lemma1-2}
\frac{1}2\Gamma(f){\bf \Delta }_{ac}\Gamma(f)+\frac12(\frac{p}2-1) \Gamma(\Gamma(f)) \d\mm\geq  \Big (\Gamma(f)\Gamma(\Delta f, f)+K\Gamma(f)^2\Big) \d\mm
\end{equation} 
and $\overline{\Gamma(f)}{\bf \Delta }_{sing}\Gamma(f)\geq 0$ as measures, where ${\bf \Delta }_{ac}\Gamma(f)$ is the absolutely continuous part in the measure decomposition ${\bf \Delta }\Gamma(f)={\bf \Delta }_{ac}\Gamma(f)+{\bf \Delta }_{sing}\Gamma(f)$ with respect to $\mm$,  and  $\overline{\Gamma(f)}$ is the quasi-continuous representation of $\Gamma(f)$.
\end{lemma}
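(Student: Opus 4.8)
The plan is to massage the inequality \eqref{eq:lemma1-1} into \eqref{eq:lemma1-2} by expanding the measure ${\bf\Delta}\,\Gamma(f)^{\frac p2}$ via a chain rule. The obstacle noted in the statement is that $\Phi(x)=x^{\frac p2}$ fails to be $C^2$ at the origin when $2\le p<4$, so Lemma \ref{lemma:chain} cannot be applied to the composition $x\mapsto x^{\frac p2}$ with $x=\Gamma(f)$ directly, and moreover $\Gamma(f)$ need not lie in ${\rm TestF}$. I would circumvent both issues simultaneously by an approximation argument: for $\eps>0$ set $\Phi_\eps(x):=(x+\eps)^{\frac p2}-\eps^{\frac p2}$, which is smooth on $[0,\infty)$ with $\Phi_\eps(0)=0$, and apply the chain rule in the form given in Lemma \ref{lemma:chain} (or rather its consequence for $\bf\Delta$, obtained through Lemma \ref{lemma:cv} so that the relevant compositions stay in ${\rm TestF}$) to get, at the level of measures,
\[
{\bf\Delta}\,\Phi_\eps(\Gamma(f)) \;=\; \Phi_\eps'(\Gamma(f))\,{\bf\Delta}\,\Gamma(f) \;+\; \Phi_\eps''(\Gamma(f))\,\Gamma(\Gamma(f))\,\d\mm .
\]
Here the first term must be understood by splitting ${\bf\Delta}\,\Gamma(f)={\bf\Delta}_{ac}\Gamma(f)+{\bf\Delta}_{sing}\Gamma(f)$: the bounded nonnegative weight $\Phi_\eps'(\Gamma(f))=\tfrac p2(\Gamma(f)+\eps)^{\frac p2-1}$ multiplies the singular part as a nonnegative measure, and the absolutely continuous part as a density. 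The second term is $\tfrac p2(\tfrac p2-1)(\Gamma(f)+\eps)^{\frac p2-2}\Gamma(\Gamma(f))\,\d\mm$.

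Next I would rewrite the hypothesis \eqref{eq:lemma1-1} for the regularized function. Replacing $\Gamma(f)^{\frac p2}$ by $\Phi_\eps(\Gamma(f))$ throughout, the bound \eqref{eq:lemma1-1} should be available — either because the hypothesis is stated for $\Phi_\eps$ in place of the pure power (and one recovers \eqref{eq:lemma1-1} by letting $\eps\to0$), or because one derives the $\eps$-version from \eqref{eq:lemma1-1} by the same chain-rule bookkeeping applied to the difference $\Phi_\eps(x)-x^{p/2}$, which is a bounded smooth function of $\Gamma(f)$. Dividing the $\eps$-analogue of \eqref{eq:lemma1-1} by $p$ and substituting the chain-rule expansion of ${\bf\Delta}\,\Phi_\eps(\Gamma(f))$, the term $\Gamma(\Delta f,f)$ enters through $\tfrac1p\Phi_\eps'(\Gamma(f))$ against the quantity $2\Gamma(\Delta f,f)$, which comes out — using the Bochner-type identity relating $\Gamma(\Gamma(f),\cdot)$, $\Hess$, and the defining formula $2\Hess[f](g,h)=\Gamma(g,\Gamma(f,h))+\Gamma(h,\Gamma(f,g))-\Gamma(f,\Gamma(g,h))$ — as $(\Gamma(f)+\eps)^{\frac p2-1}\Gamma(\Delta f,f)$. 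After this substitution the inequality reads
\[
\tfrac12(\Gamma(f)+\eps)^{\frac p2-1}{\bf\Delta}_{ac}\Gamma(f)\,\d\mm \;+\;\tfrac12(\tfrac p2-1)(\Gamma(f)+\eps)^{\frac p2-2}\Gamma(\Gamma(f))\,\d\mm \;\ge\;\big((\Gamma(f)+\eps)^{\frac p2-1}\Gamma(\Delta f,f)+K\,\Phi_\eps(\Gamma(f))/\ \cdots\big)\d\mm
\]
modulo the nonnegative singular contribution $\tfrac12(\Gamma(f)+\eps)^{\frac p2-1}{\bf\Delta}_{sing}\Gamma(f)$, which is discarded to the good side since it is $\ge0$.

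Finally I would pass to the limit $\eps\downarrow0$. The weights $(\Gamma(f)+\eps)^{\frac p2-1}\to\Gamma(f)^{\frac p2-1}$ and, when $p\ge4$, $(\Gamma(f)+\eps)^{\frac p2-2}\to\Gamma(f)^{\frac p2-2}$ pointwise and boundedly on bounded sets; the delicate case is $2\le p<4$, where $(\Gamma(f)+\eps)^{\frac p2-2}$ blows up on $\{\Gamma(f)=0\}$. There, however, $\Gamma(\Gamma(f))=0$ $\mm$-a.e. on $\{\Gamma(f)=0\}$ by the locality of the weak gradient (equivalently, $\Gamma(f)$ is a Sobolev function vanishing there, so its gradient vanishes there), and $(\tfrac p2-1)>0$, so the product $(\Gamma(f)+\eps)^{\frac p2-2}\Gamma(\Gamma(f))$ is monotone (or at least dominated) and converges to $\Gamma(f)^{\frac p2-2}\Gamma(\Gamma(f))$; likewise $(\Gamma(f)+\eps)^{\frac p2-1}\Gamma(\Delta f,f)\to\Gamma(f)^{\frac p2-1}\Gamma(\Delta f,f)$ since the weight stays bounded. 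Taking limits of both sides as measures — testing against an arbitrary nonnegative Lipschitz $\varphi$ with bounded support and using dominated/monotone convergence — yields exactly \eqref{eq:lemma1-2}. The main obstacle I anticipate is precisely the $\eps\to0$ passage for $2\le p<4$: one must argue carefully that the singular part ${\bf\Delta}_{sing}\Gamma(f)$ stays nonnegative after multiplication by the (bounded, nonnegative) weight and that no mass is lost in the $\Gamma(\Gamma(f))$ term on $\{\Gamma(f)=0\}$, which is where the locality of $|\D\cdot|$ and the hypothesis $\Gamma(f),\Gamma(f)^{\frac p2}\in\M_\infty$ (guaranteeing the $E$-polar sets are negligible and the integration-by-parts against quasi-continuous representatives is legitimate) do the real work.
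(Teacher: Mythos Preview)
Your regularization strategy via $\Phi_\eps(x)=(x+\eps)^{p/2}-\eps^{p/2}$ is a natural idea and genuinely different from what the paper does, but as written it has real gaps.

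\textbf{Gap 1: the chain rule you invoke does not apply.} Lemma~\ref{lemma:chain} (and Lemma~\ref{lemma:cv}) require the inner functions to lie in ${\rm TestF}$, and the paper explicitly notes that $\Gamma(f)$ is in general \emph{not} a test function. So you cannot obtain ${\bf\Delta}\Phi_\eps(\Gamma(f))=\Phi_\eps'(\Gamma(f)){\bf\Delta}\Gamma(f)+\Phi_\eps''(\Gamma(f))\Gamma(\Gamma(f))\,\d\mm$ from those lemmas. A chain rule for the measure-valued Laplacian with inner function merely in $\M_\infty$ and outer function $C^2$ would be needed; that is a separate statement that must be cited or proved.

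\textbf{Gap 2: you do not have the hypothesis for $\Phi_\eps$.} Inequality~\eqref{eq:lemma1-1} is assumed only for the pure power $\Gamma(f)^{p/2}$. Your two proposed fixes both fail: the hypothesis is not stated for $\Phi_\eps$, and for $2\le p<4$ the difference $\Phi_\eps(x)-x^{p/2}$ is only $C^1$, not smooth, so the ``chain-rule bookkeeping'' you describe does not go through.

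\textbf{Gap 3: the sign of the singular part.} You discard $\tfrac12(\Gamma(f)+\eps)^{\frac p2-1}{\bf\Delta}_{sing}\Gamma(f)$ as nonnegative, but nothing in your argument establishes this; the hypothesis controls ${\bf\Delta}_{sing}\Gamma(f)^{p/2}$, not ${\bf\Delta}_{sing}\Gamma(f)$.

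\textbf{How the paper proceeds instead.} The paper avoids regularization altogether. It uses only the chain rule for $\Gamma$ (valid for locally Lipschitz outer functions, so for $x\mapsto x^{p/2}$ with $p\ge2$) to write $\int\varphi\,\d{\bf\Delta}\Gamma(f)^{p/2}=-\tfrac p2\int\Gamma(f)^{p/2-1}\Gamma(\varphi,\Gamma(f))\,\d\mm$. Then, using that $\Gamma(f)\in\M_\infty$ and Definition~\ref{def-mlaplacian}, it tests ${\bf\Delta}\Gamma(f)$ against $\varphi\,\overline{\Gamma(f)}^{\,p/2-1}$ (which lies in $W^{1,2}_{loc}(\{\Gamma(f)>0\})$ by Leibniz and chain rules), and compares the two expressions to obtain the measure inequality
\[
\tfrac1p{\bf\Delta}\Gamma(f)^{p/2}\le \tfrac12\,\overline{\Gamma(f)}^{\,p/2-1}{\bf\Delta}\Gamma(f)+\tfrac12(\tfrac p2-1)\Gamma(f)^{p/2-2}\Gamma(\Gamma(f))\,\d\mm.
\]
Splitting both sides into absolutely continuous and singular parts, and using that the singular part of $\tfrac1p{\bf\Delta}\Gamma(f)^{p/2}$ is $\ge0$ directly from~\eqref{eq:lemma1-1}, one reads off both~\eqref{eq:lemma1-2} and the nonnegativity of $\overline{\Gamma(f)}^{\,p/2-1}{\bf\Delta}_{sing}\Gamma(f)$ as consequences, rather than assumptions. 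This is more direct than your route and sidesteps all three gaps above.
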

\begin{proof}
Since $p>2$, it can be seen that \eqref{eq:lemma1-2} is equivalent to
\begin{equation}\label{eq:lemma1-2.4}
\frac{1}2\Gamma(f)^{\frac{p}2-1}{\bf \Delta }_{ac}\Gamma(f)+\frac12(\frac{p}2-1)\Gamma(f)^{\frac{p}2-2} \Gamma(\Gamma(f)) \d\mm\geq  \Big (\Gamma(f)^{\frac{p}2-1}\Gamma(\Delta f, f)+K\Gamma(f)^{\frac{p}2}\Big) \d\mm.
\end{equation}
Assume that we  have the decomposition of the measure $\frac1{p}{\bf \Delta} \Gamma (f)^{\frac{p}2}$ with respect to $\mm$: $
\frac1{p}{\bf \Delta} \Gamma (f)^{\frac{p}2}=\frac1{p}{\bf \Delta}_{sing} \Gamma (f)^{\frac{p}2}+\frac1{p}{ \bf \Delta}_{ac} \Gamma (f)^{\frac{p}2}$.
From \eqref{eq:lemma1-1} we know the singular part $\frac1{p}{\bf \Delta}_{sing} \Gamma (f)^{\frac{p}2}$ of the measure $\frac1{p}{\bf \Delta} \Gamma (f)^{\frac{p}2}$ is non-negative.

From hypothesis we know $\Gamma(f), \Gamma (f)^{\frac{p}2} \in D({\bf \Delta})$, by chain rule we know
\begin{equation}\label{eq:lemma1-2.5}
\int \varphi \,\d{\bf \Delta} \Gamma (f)^{\frac{p}2}=-\int \Gamma(\varphi,  \Gamma (f)^{\frac{p}2})\, \d \mm=-\int \frac{p}2 \Gamma(f)^{\frac{p}2-1}\Gamma(\varphi, \Gamma(f))\,\d \mm
\end{equation}
for any Lipschitz function $\varphi$ with bounded support.

Denote by $\overline{\Gamma(f)}$  the quasi-continuous representation of $\Gamma(f)$. From Leibniz rule and chain rule we know $\varphi ( \Gamma (f)+\epsilon)^{\frac{p}2-1}\in W^{1,2}$, for any $\epsilon>0$.  According to Definition \ref{def-mlaplacian} we have
\begin{eqnarray*}
&&-\int \varphi (\overline{\Gamma(f)}+\epsilon)^{\frac{p}2-1}\,\d{\bf \Delta }\Gamma(f) =\int \Gamma(\varphi ({\Gamma(f)}+\epsilon)^{\frac{p}2-1}, \Gamma(f))\,\d \mm \\
&=& \int \varphi (\frac{p}2-1) ({\Gamma(f)}+\epsilon)^{\frac{p}2-2} \Gamma(\Gamma(f))\,\d \mm+\int({\Gamma(f)}+\epsilon)^{\frac{p}2-1}\Gamma(\varphi, \Gamma(f))\,\d \mm.
\end{eqnarray*}
Letting $\epsilon \to 0$, by monotone convergence theorem we obtain
\begin{equation}\label{eq:lemma1-3}
-\int \varphi \overline{\Gamma(f)}^{\frac{p}2-1}\,\d{\bf \Delta }\Gamma(f)  = \int \Big [ \varphi (\frac{p}2-1) {\Gamma(f)}^{\frac{p}2-2} \Gamma(\Gamma(f))+{\Gamma(f)}^{\frac{p}2-1}\Gamma(\varphi, \Gamma(f))\Big ]\,\d \mm.
\end{equation}

Combining \eqref{eq:lemma1-2.5} and \eqref{eq:lemma1-3} we have 
\begin{equation}\label{eq:lemma1-4}
\frac1{p}{\bf \Delta} \Gamma (f)^{\frac{p}2}= \frac{1}2\overline{\Gamma(f)}^{\frac{p}2-1}{\bf \Delta }\Gamma(f)+\frac12(\frac{p}2-1)\Gamma(f)^{\frac{p}2-2} \Gamma(\Gamma(f))  \d\mm
\end{equation}
as measures. Therefore,  we know
\begin{eqnarray*}
\frac1{p}{\bf \Delta}_{ac} \Gamma (f)^{\frac{p}2}&=&\frac{1}2\overline{\Gamma(f)}^{\frac{p}2-1}{\bf \Delta }_{ac}\Gamma(f)+\frac12(\frac{p}2-1)\Gamma(f)^{\frac{p}2-2} \Gamma(\Gamma(f))  \d\mm\\
&=&\frac{1}2{\Gamma(f)}^{\frac{p}2-1}{\bf \Delta }_{ac}\Gamma(f)+\frac12(\frac{p}2-1)\Gamma(f)^{\frac{p}2-2} \Gamma(\Gamma(f))  \d\mm
\end{eqnarray*}
and
\begin{eqnarray*}
\frac1{p}{\bf \Delta}_{sing}\Gamma (f)^{\frac{p}2}=\frac{1}2\overline{\Gamma(f)}^{\frac{p}2-1}{\bf \Delta }_{sing}\Gamma(f).
\end{eqnarray*}

In conclusion,  we obtain
\begin{equation*}
\frac1{p}{\bf \Delta} \Gamma (f)^{\frac{p}2}= \frac{1}2{\Gamma(f)}^{\frac{p}2-1}{\bf \Delta }_{ac}\Gamma(f)+\frac12(\frac{p}2-1)\Gamma(f)^{\frac{p}2-2} \Gamma(\Gamma(f))  \d\mm+\frac{1}2\overline{\Gamma(f)}^{\frac{p}2-1}{\bf \Delta }_{sing}\Gamma(f).
\end{equation*}
Hence \eqref{eq:lemma1-1} is equivalent to  \eqref{eq:lemma1-2.4}, we prove the lemma.
\end{proof}
\bigskip

The following lemma will be used in the proof of Theorem \ref{th-improvedBE}.
\begin{lemma}\label{lemma-2}
Let $P(r):[0, \infty) \mapsto [-\frac14, \infty)$ be a function defined as
 \[
P(r)=r-\frac1{4(r+1)},
\]
and $a_0\geq 0$ be an arbitrary initial datum, we define  $(a_n)_{n\in \N}$ recursively by the formula
\[
a_{n+1}=P(a_n).
\]
Then there exists an integer $N_0$ such that $0\leq a_{N_0}<1$ and $-\frac14 \leq a_{N_0+1}<0$.

 Conversely, for any $a \in [0,1)$ and $b>a$, there exists a sequence $a_0, ..., a_{N_0}$ defined by the recursive function $P$ such that $a_0>b$ and $a_{N_0}=a$.
\end{lemma}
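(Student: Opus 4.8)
The plan is to deduce everything from a couple of elementary monotonicity facts about $P$ and then argue by one-dimensional dynamics, first for the forward orbit and then for the backward one. First I would record the following: $P$ is $C^1$ on $[0,\infty)$ with $P'(r)=1+\frac1{4(r+1)^2}>0$, so $P$ is a strictly increasing continuous bijection of $[0,\infty)$ onto $[P(0),\infty)=[-\tfrac14,\infty)$, and it has a strictly increasing continuous inverse $P^{-1}\colon[-\tfrac14,\infty)\to[0,\infty)$; moreover $P(r)<r$ for every $r\ge0$ (since $\frac1{4(r+1)}>0$), equivalently $P^{-1}(s)>s$ for every $s\ge0$; and $P(r)<0$ exactly when $4r(r+1)<1$, so $P$ has a unique zero $r_*:=\frac{\sqrt2-1}{2}\in(0,1)$ on $[0,\infty)$ and $P$ maps $[0,r_*)$ into $[-\tfrac14,0)$.

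For the first assertion, fix $a_0\ge0$. If $a_0<r_*$ take $N_0=0$: then $a_{N_0}\in[0,1)$ and $a_{N_0+1}=P(a_0)\in[-\tfrac14,0)$. Otherwise $a_0\ge r_*$, and the point is that some iterate must eventually fall below $r_*$. Indeed, as long as $a_n\ge r_*$ one has $a_{n+1}=P(a_n)\in[P(r_*),a_n)=[0,a_n)$, so if no iterate ever dropped below $r_*$ the sequence $(a_n)$ would be strictly decreasing and bounded below by $r_*$, hence convergent to some $L\ge r_*$ with $L=P(L)$ by continuity --- impossible because $P(L)<L$. Let $N_0\ge1$ be the first index with $a_{N_0}<r_*$; since $a_{N_0-1}\ge r_*$ and $P$ is increasing, $a_{N_0}=P(a_{N_0-1})\ge P(r_*)=0$, whence $a_{N_0}\in[0,r_*)\subset[0,1)$ and $a_{N_0+1}=P(a_{N_0})\in[-\tfrac14,0)$.

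For the converse, given $a\in[0,1)$ and $b>a$, I would iterate $P^{-1}$ backwards: set $b_0:=a$ and $b_{k+1}:=P^{-1}(b_k)$, which is legitimate since every $b_k\ge a\ge0\ge-\tfrac14$. By $P^{-1}(s)>s$ the sequence $(b_k)$ is strictly increasing, and it cannot be bounded (a finite limit $L$ would give $L=P^{-1}(L)$, i.e. $P(L)=L$), so $b_k\to\infty$ and we may pick $N_0$ with $b_{N_0}>b$. Relabelling $a_j:=b_{N_0-j}$ for $j=0,\dots,N_0$ yields a forward orbit of $P$ with $a_0=b_{N_0}>b$, $a_{N_0}=b_0=a$, and $a_{j+1}=b_{N_0-j-1}=P(b_{N_0-j})=P(a_j)$, as required.

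There is no genuine obstacle here; the only thing one must watch is the domain bookkeeping in the two monotone-convergence steps --- that the forward orbit stays in $[0,\infty)$ until it first enters $[0,r_*)$, and that the backward orbit stays $\ge a\ge0$ throughout --- and both are immediate from the monotonicity of $P$. The rest is a routine computation with the explicit function $P$.
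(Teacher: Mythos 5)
Your proof is correct and follows essentially the same elementary monotonicity strategy as the paper (strict monotonicity of $P$, $P(r)<r$, and the induced increasing inverse $P^{-1}$). The only cosmetic difference is the escape-from-interval step: the paper observes that the decrements $a_n - a_{n+1} = \frac{1}{4(a_n+1)}$ are bounded below by the constant $\frac{1}{4(a_0+1)}$ while $a_n \in [0,a_0]$, giving a quantitative bound on $N_0$, whereas you rule out a bounded monotone orbit by noting $P$ (respectively $P^{-1}$ for the converse) has no fixed point; both versions are sound and equally elementary.
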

\begin{proof}
It can be seen that $a_{n+1} <a_n$. If $a_0\geq 0$, by monotonicity we know $a_n-a_{n+1} \in [\frac1{4(a_0+1)},\frac14]$ for any $n \in \N$. So there exists a unique $N_0$ such that $0\leq a_{N_0}<1$ and $-\frac14 \leq a_{N_0+1}<0$. Conversely, since $P(r)$ is strictly monotone on $[0, \infty)$, we know $P^{-1}(r):[-\frac14, \infty) \mapsto [0, \infty)$ is well defined. And $(P^{-1})^{(n+1)}(a)-(P^{-1})^{(n)}(a) \in [\frac1{4((P^{-1})^{(n+1)}(a)+1)},\frac14]$ for any $n \in \N$. Thus there exists $N \in \N$ such that $(P^{-1})^{(N_0)}(a) \geq b$. Finally, we can pick $a_0=(P^{-1})^{(N_0)}(a)$, so that $a_{N_0}=(P)^{(N_0)}(a_0)=a$  fulfils our request.
\end{proof}

\bigskip


As we mentioned in the Introduction, the space of test functions is dense in $W^{1,2}\ms$ when $L^p$-gradient estimate for heat flow holds.

\begin{lemma}[Density of test functions in $W^{1,2}\ms$,  Remark 2.5 \cite{AGS-B}]\label{lemma:density}
Let $\ms$ be a metric measure space satisfying Assumption \ref{assumption-1}.  Assume that for any $f\in W^{1,2}\cap \Lip \cap L^\infty\ms$ we have the $L^p$-gradient estimate
\begin{equation}\label{eq:lemma3-1}
|\D \H_t f|^p \leq e^{-pKt}\H_t|\D f|^p~~\mm-\text {a.e.}
\end{equation}
for some $p\in [1, \infty)$. Then the space of test functions ${\rm TestF}\ms$ is dense in $W^{1,2}$.
\end{lemma}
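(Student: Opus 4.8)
The plan is a two–step approximation: reduce an arbitrary element of $W^{1,2}$ to a bounded Lipschitz function, and then regularise the latter by the resolvent of the heat semigroup, invoking the $p$–gradient estimate only to control the size of the weak gradient of the regularisation.

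\emph{Step 1 (reduction to bounded Lipschitz data).} Since $W^{1,2}\ms$ is a Hilbert space, Lipschitz functions are strongly dense in $W^{1,2}$ (recalled in Section 2). Truncating a Lipschitz $g\in L^2$ at level $n$, i.e. $g_n:=(g\wedge n)\vee(-n)$, does not increase the Lipschitz constant, and $g_n\to g$, $|\D g_n|=|\D g|\,\mathbf 1_{\{|g|<n\}}\to|\D g|$ in $L^2$, so $g_n\to g$ in $W^{1,2}$. Hence it suffices to approximate in $W^{1,2}$ every $g\in W^{1,2}\cap L^\infty$ with $|\D g|\in L^\infty$. Moreover, by Jensen's inequality for the sub–Markovian semigroup $(\H_t)$ the $p$–gradient estimate with $p<2$ implies the one with $p=2$, so we may assume $p\ge 2$; then $|\D g|^p\le\|\D g\|_{L^\infty}^{p-2}|\D g|^2\in L^1\cap L^2$, so all the heat–flow expressions below make sense.

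\emph{Step 2 (resolvent regularisation).} For $\lambda>(-K)\vee 0$ put $R_\lambda:=(\lambda-\Delta)^{-1}=\int_0^\infty e^{-\lambda t}\H_t\,\dt$, the resolvent of the generator of the heat flow, and set $g^\lambda:=\lambda R_\lambda g$. I claim $g^\lambda\in{\rm TestF}\ms$. Indeed $g^\lambda\in D(\Delta)\subset W^{1,2}$ with $\Delta g^\lambda=\lambda(g^\lambda-g)$; since $\lambda R_\lambda$ is an $L^\infty$–contraction (because $(\H_t)$ is and $\lambda\int_0^\infty e^{-\lambda t}\,\dt=1$), $g^\lambda\in L^\infty$, whence $\Delta g^\lambda=\lambda(g^\lambda-g)\in W^{1,2}\cap L^\infty$. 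It remains to check $|\D g^\lambda|\in L^\infty$: using the subadditivity of the minimal weak upper gradient under the Bochner integral defining $R_\lambda$, then the ($p$-th root of the) gradient estimate $|\D\H_t g|\le e^{-Kt}(\H_t|\D g|^p)^{1/p}$ and the $L^\infty$–contractivity $\H_t|\D g|^p\le\|\D g\|_{L^\infty}^p$,
\[
|\D g^\lambda|\ \le\ \lambda\int_0^\infty e^{-\lambda t}\,|\D\H_t g|\,\dt\ \le\ \lambda\,\|\D g\|_{L^\infty}\int_0^\infty e^{-(\lambda+K)t}\,\dt\ =\ \frac{\lambda}{\lambda+K}\,\|\D g\|_{L^\infty}\ <\ \infty .
\]
Thus all three defining conditions of ${\rm TestF}\ms$ hold.

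\emph{Step 3 (convergence) and the main obstacle.} Finally $g^\lambda\to g$ in $W^{1,2}$ as $\lambda\to\infty$: writing the spectral resolution $g=\int_{[0,\infty)}\d E_\mu g$ of $-\Delta$, one computes $\|g^\lambda-g\|_{L^2}^2+E(g^\lambda-g)=\int_{[0,\infty)}(1+\mu)\,\mu^2(\lambda+\mu)^{-2}\,\d\|E_\mu g\|^2$, whose integrand is dominated by the $\d\|E_\mu g\|^2$–integrable function $1+\mu$ (as $g\in W^{1,2}$) and tends to $0$ pointwise, so dominated convergence gives the claim; combined with Step 1 this shows ${\rm TestF}\ms$ is dense in $W^{1,2}\ms$. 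The only non-routine point is the subadditivity of $|\D\cdot|$ under the vector-valued integral used in Step 2: it follows by approximating $\int_0^\infty e^{-\lambda t}\H_t g\,\dt$ by Riemann sums $\sum_i c_i\H_{t_i}g$ with $c_i\ge0$ in $W^{1,2}$, using the pointwise Cauchy–Schwarz bound $\Gamma(\sum_i c_i h_i)^{1/2}\le\sum_i c_i\,\Gamma(h_i)^{1/2}$, and passing to the limit via the closure property of the weak gradient; this is the step that deserves the most care, everything else being a formal consequence of the $L^2$–functional calculus for $\Delta$ and of Assumption \ref{assumption-1}.
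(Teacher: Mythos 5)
Your proposal is correct and follows essentially the same strategy as the paper: density of Lipschitz functions in $W^{1,2}$ (valid since $W^{1,2}$ is Hilbert) followed by a semigroup-type mollification, with the $p$-gradient estimate used exactly where the paper uses it, namely to guarantee that the regularisation keeps $|\D\cdot|$ in $L^\infty$. The only cosmetic difference is that you collapse the paper's two-stage argument (invariance of $\V^1_\infty$ under $\H_t$ plus the mollification of page 351 in \cite{AGS-B}) into a single resolvent regularisation $\lambda R_\lambda$, and you spell out the details (truncation, subadditivity of the weak gradient under the Bochner integral, spectral convergence) that the paper delegates to the references.
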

\begin{proof}
As we discussed in the preliminary section, the space
\[
\V^1:=\Big\{ \varphi \in W^{1,2}: \Gamma(\varphi) \in L^\infty(X, \mm)\Big \}
\]
is dense in $W^{1,2}$. We also know that the 
\[
\V^1_\infty:=\Big\{ \varphi \in W^{1,2}\cap L^\infty: \Gamma(\varphi) \in L^\infty(X, \mm)\Big \}
\]
in dense in $L^2$, and $\V^1_\infty$ is invariant under the action $(\H_t)_t$ by \eqref{eq:lemma3-1} and Sobolev-to-Lipschitz property. Hence by an approximation argument (see e.g. Lemma 4.9 in \cite{AGS-M}), we know $\V^1_\infty$ is dense in $W^{1,2}$. Similarly, by a semigroup mollification (see e.g. page 351, \cite{AGS-B}) we can prove that
\[
\V^2_\infty:=\Big\{ \varphi \in \V^1_\infty: \Delta \varphi \in W^{1,2} \cap L^\infty(X, \mm)\Big \}
\]
is dense in $W^{1,2}$.
\end{proof}

We now introduce the following technical assumption, which  is important in our proof. It can be proved that Riemannian manifolds and $\rcd$ spaces satisfy this assumption.
\begin{assumption}[Existence of good algebra]\label{assumption-2}
We  assume the existence of a dense subspace $\mathcal{A}$ in ${\rm TestF}\ms$ with respect to the graph norm
\[
f\mapsto \Big [ \| (-\Delta)^{\frac 32} f\|^2_{L^2}+\|f\|^2_{W^{1,2}} \Big ]^\frac 12=\Big [  \|\Gamma(\Delta f)\|^2_{L^2}+\|f\|^2_{W^{1,2}} \Big ]^\frac12
\]
such that $\Gamma(f) \in \M_\infty$ for any $f\in\mathcal{A}$. 
\end{assumption} 
It can be seen that  $\mathcal{A}$ is an algebra (i.e.  $\mathcal{A}$ is closed w.r.t. pointwise multiplication), if it is non-trivial. In particular, by Lemma \ref{lemma:density} we know that $\mathcal{A}$ is  dense  in $W^{1,2}$ if $L^p$ gradient estimate holds.
\bigskip

\begin{theorem}[Improved Bakry-\'Emery theory]\label{th-improvedBE}
Let $\ms$ be a metric measure space satisfying Assumption \ref{assumption-1} and Assumption \ref{assumption-2}. If for any $f\in W^{1,2} \cap \Lip \cap L^\infty\ms$ we have the gradient estimate
\begin{equation}\label{eq:th1-1}
|\D \H_t f|^p \leq e^{-pKt}\H_t|\D f|^p,~~\mm-\text {a.e.}
\end{equation}
for some $p\in [1, \infty)$. Then $\ms$ is a $\rcd$ space.
\end{theorem}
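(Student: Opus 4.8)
The plan is to establish the Bochner inequality $\be$ for $\ms$ and then to conclude by the characterization of $\rcd$ through $\be$ from \cite{AGS-B}. First I would dispose of the range $p\le 2$. For $p=2$, \eqref{eq:th1-1} is precisely the hypothesis of \cite{AGS-B}, so $\ms$ is $\rcd$. For $p\in[1,2)$, since the heat semigroup $(\H_t)_t$ is Markov, Jensen's inequality applied to the convex function $s\mapsto s^{2/p}$ (note $2/p>1$) gives $(\H_t|\D f|^p)^{2/p}\le \H_t|\D f|^2$ $\mm$-a.e.; raising \eqref{eq:th1-1} to the power $2/p$ then yields the $2$-gradient estimate $|\D\H_t f|^2\le e^{-2Kt}\H_t|\D f|^2$ $\mm$-a.e., and \cite{AGS-B} again applies. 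So from now on I assume $p>2$.

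Next I would extract a defect Bochner inequality. For $f\in{\rm TestF}$ and a nonnegative test function $\varphi$, I would differentiate $s\mapsto\int\H_{t-s}\varphi\,|\D\H_s f|^p\,\d\mm$ in $s$, using $\tfrac{d}{ds}\Gamma(\H_s f)^{p/2}=p\,\Gamma(\H_s f)^{p/2-1}\Gamma(\H_s f,\Delta\H_s f)$ and the self-adjointness of $\H_t$, and then let $t\downarrow0$ in the inequality between the endpoint values furnished by \eqref{eq:th1-1}; this produces the weak $p$-Bochner inequality \eqref{eq:lemma1-1}. The regularity required along the way, in particular $\Gamma(f)^{p/2}\in\M_\infty$, I would get by approximating $x\mapsto x^{p/2}$ by the smooth functions $x\mapsto(x+\varepsilon)^{p/2}-\varepsilon^{p/2}$, applying the chain rules of Lemma \ref{lemma:chain}, and passing to the limit $\varepsilon\downarrow 0$ with the help of \eqref{eq:th1-1}. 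Given \eqref{eq:lemma1-1}, Lemma \ref{lemma-1} upgrades it to \eqref{eq:lemma1-2}; dividing by $\Gamma(f)^{p/2-1}$ on $\{\Gamma(f)>0\}$ (the complementary set being trivial by locality) one obtains, for every $f$ in the algebra $\mathcal A$ of Assumption \ref{assumption-2},
\begin{equation*}
\tfrac12{\bf \Delta}_{ac}\Gamma(f)-\Gamma(\Delta f,f)\,\mm\ \geq\ K\,\Gamma(f)\,\mm-a_0\,\frac{\Gamma(\Gamma(f))}{\Gamma(f)}\,\mm,\qquad a_0:=\tfrac{p-2}{4}\geq 0,
\end{equation*}
together with ${\bf \Delta}_{sing}\Gamma(f)\geq0$.

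The heart of the argument is the self-improvement of the defect constant: if the inequality just displayed (with ${\bf \Delta}_{sing}\Gamma(f)\geq0$) holds for all $f\in\mathcal A$ with some constant $a\geq0$, then it holds with the strictly smaller constant $P(a)=a-\tfrac1{4(a+1)}$ of Lemma \ref{lemma-2}. Following the scheme of \cite{S-S}, I would apply the defect-$a$ inequality to $\Phi(f)\in\mathcal A$ for $\Phi\in C^\infty$ with $\Phi(0)=0$ (legitimate by Lemma \ref{lemma:cv} after a polynomial approximation of $\Phi$ on the range of $f$), expand $\Gamma(\Phi(f))$, ${\bf \Delta}\Gamma(\Phi(f))$, $\Gamma(\Phi(f),\Delta\Phi(f))$ and $\Gamma(\Gamma(\Phi(f)))$ by the chain and Leibniz rules (Lemma \ref{lemma:chain}), and reorganize the outcome as a measure-valued quadratic inequality in the parameter $\Phi''(f)/\Phi'(f)$, which may be prescribed to be an arbitrary real constant. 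Lemma \ref{lemma:delta} applied to this quadratic then returns the nonnegativity of the relevant singular parts together with a pointwise discriminant inequality on the densities; combining the latter with the Cauchy--Schwarz inequality relating $\Gamma(f,\Gamma(f))$, $\Gamma(f)$ and $\Gamma(\Gamma(f))$ yields precisely the defect-$P(a)$ inequality. Iterating from $a_0$, Lemma \ref{lemma-2} provides an $N_0$ with $a_{N_0}\in[0,1)$ and $a_{N_0+1}=P(a_{N_0})\in[-\tfrac14,0)$; since $a_{N_0+1}<0$, the defect-$a_{N_0+1}$ inequality reduces to $\tfrac12{\bf \Delta}_{ac}\Gamma(f)-\Gamma(\Delta f,f)\,\mm\ge K\Gamma(f)\,\mm$, which together with ${\bf \Delta}_{sing}\Gamma(f)\ge0$ is exactly $\be$ for every $f\in\mathcal A$. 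By the density of $\mathcal A$ in $W^{1,2}$ (Lemma \ref{lemma:density} and Assumption \ref{assumption-2}) and a routine approximation, $\be$ then holds in the form required by \cite{AGS-B}, and hence $\ms$ is $\rcd$.

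I expect the self-improvement step to be the main obstacle. Producing the precise constant $P(a)$ demands delicate measure-theoretic bookkeeping throughout the chain-rule expansions --- tracking the singular parts and the quasi-continuous representatives --- and isolating the exact Cauchy--Schwarz/discriminant combination that closes the induction; it is also here that the hypotheses $\Gamma(f),\Gamma(f)^{p/2}\in\M_\infty$ of Assumption \ref{assumption-2} are indispensable. A secondary difficulty is the rigorous justification of $\Gamma(f)^{p/2}\in\M_\infty$ and of the $t\downarrow0$ differentiation that yields \eqref{eq:lemma1-1}.
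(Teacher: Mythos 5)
Your proposal gets the architecture right (extract a defect Bochner inequality with constant $a_0=\tfrac{p-2}{4}$, run the self-improvement iteration $a\mapsto P(a)$, pass back to a $2$-gradient estimate, invoke \cite{AGS-B}), and your reduction of the range $p<2$ to $p=2$ via Jensen's inequality for the convex function $s\mapsto s^{2/p}$ is a clean observation the paper does not spell out. The iteration endpoint is also fine: since the implication $B(r)\Rightarrow B(P(r))$ only requires $r\geq 0$, applying it once from $a_{N_0}\in[0,1)$ does legitimately give $B(a_{N_0+1})$ with $a_{N_0+1}<0$, hence $B(0)$.

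However, there is a genuine gap in the core self-improvement step. You propose to apply the defect-$a$ inequality to a \emph{single-variable} composition $\Phi(f)$, extract a quadratic in $\lambda=\Phi''(f)/\Phi'(f)$, and close with ``the Cauchy--Schwarz inequality relating $\Gamma(f,\Gamma(f))$, $\Gamma(f)$ and $\Gamma(\Gamma(f))$''. Working this out with the chain rules of Lemma~\ref{lemma:chain}, after the substitution $\Phi'(f)=1$, $\Phi''(f)=\lambda$, $\Phi'''(f)=0$ one finds that the defect-$a$ inequality applied to $\Phi(f)$ becomes
\[
(1+4a)\,\Gamma(f)^3\lambda^2+(1+4a)\,\Gamma(f)\Gamma(f,\Gamma(f))\lambda+\Gamma_{2,K,a}(f)\ \geq\ 0,
\]
so that Lemma~\ref{lemma:delta} yields
\[
\Gamma_{2,K,a}(f)\,\Gamma(f)^2\ \geq\ \tfrac{1+4a}{4}\,\Gamma(f,\Gamma(f))^2 .
\]
To convert this into $B(P(a))$ you would need to replace $\Gamma(f,\Gamma(f))^2$ from \emph{below} by (a multiple of) $\Gamma(f)\Gamma(\Gamma(f))$, but Cauchy--Schwarz only gives the upper bound $\Gamma(f,\Gamma(f))^2\leq\Gamma(f)\Gamma(\Gamma(f))$; it goes in the wrong direction. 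Indeed $\Gamma(f,\Gamma(f))$ can vanish identically at points where $\Gamma(\Gamma(f))$ does not, in which case the discriminant inequality above collapses to $\Gamma_{2,K,a}(f)\geq 0$ and no improvement at all is obtained. The single-variable trick therefore cannot close the induction with the constant $P(a)$ — nor with any strictly smaller constant.

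What is missing is precisely the \emph{off-diagonal} Hessian. The paper, following \cite{S-S} and \cite{G-N}, applies the defect-$a$ inequality to the \emph{three-variable} polynomial $\Phi(\mathbf f)=\lambda f_1+(f_2-a)(f_3-b)-ab$ with $\mathbf f=(f,g,h)$, then replaces $a\mapsto g$, $b\mapsto h$. Expanding produces the mixed term $\Hess[f](g,h)$, and Lemma~\ref{lemma:delta} gives a bound of the form $\Hess[f](g,h)^2\leq (1+r)\,\Gamma_{2,K,r}(f)\,\Gamma(g)\Gamma(h)/\Gamma(f)$. The crucial algebraic input that your proposal lacks is then the symmetry identity
\[
\Hess[f](g,h)+\Hess[g](f,h)=\Gamma\big(\Gamma(f,g),h\big),
\]
combined with the substitution $f=g$ and (after approximation) $h\to\Gamma(f,g)$, which makes $\Gamma(\Gamma(f))$ appear \emph{linearly} on both sides and gives $\Gamma(\Gamma(f))\leq 4(1+r)\Gamma_{2,K,r}(f)$, i.e.\ exactly $B(P(r))$. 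Without introducing independent arguments $g,h$, this step is not available. So you should replace the single-variable composition in your self-improvement step by the three-variable polynomial scheme of Theorem~3.4 in \cite{S-S}; the rest of your plan then goes through essentially as in the paper.

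A small secondary remark: Assumption~\ref{assumption-2} only postulates $\Gamma(f)\in\M_\infty$ for $f\in\mathcal A$; the membership $\Gamma(f)^{p/2}\in D(\mathbf\Delta)$ is \emph{derived} in the proof (from the $p$-gradient estimate and Lemma 2.6 of \cite{S-S}), not assumed. Your approximation $x\mapsto(x+\eps)^{p/2}-\eps^{p/2}$ is a reasonable way to justify that derivation, but it is not part of the hypotheses.
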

\begin{proof}
If $p\leq 2$, by the result in \cite{AGS-B} we know $\ms$ is a $\rcd$. Now we assume $p>2$.

{\bf Part 1.} Firstly, we prove 
\begin{equation}\label{eq:th1-1.5}
\Gamma(f){\bf \Delta }_{ac}\Gamma(f)+\epsilon\Gamma(\Gamma(f)) \geq  \Gamma(f)\Gamma(\Delta f, f)+K\Gamma(f)^2,
\end{equation}
and $\overline{\Gamma(f)}{\bf \Delta }_{sing}\Gamma(f)\geq 0$, for any $f\in \mathcal{A}$ and $\epsilon>0$.

For any $f\in \mathcal{A}, \varphi \in {\rm TestF}\ms, \varphi\geq 0$  and $t>0$, we define $F:[0,t] \mapsto \R$ by
\[
F(s)=\int  e^{-pKs}\H_s\varphi\Gamma(\H_{t-s} f)^{\frac{p}2}.
\]
It can be seen that $F$ is a $C^1$ function (see Lemma 2.1, \cite{AGS-B}). From \eqref{eq:th1-1} we know $F(s) \leq F(t)$ holds for any $s\in [0, t]$. Hence $F'(s)\restr{s=t} \geq 0$, and so
\begin{eqnarray*}
&&\int e^{-pKs}\Delta \H_s \varphi \Gamma(\H_{t-s} f)^{\frac{p}2}\restr{s=t}-p\int e^{-pKs}\H_s \varphi \Gamma(\H_{t-s} f)^{\frac{p}2-1} \Gamma(\Delta \H_{t-s} f, \H_{t-s}f)\restr{s=t}\\
&\geq& pK\int e^{-pKs}\H_s\varphi \Gamma(\H_{t-s} f)^{\frac{p}2} \restr{s=t}.
\end{eqnarray*}

Letting $t \to 0$ we obtain
\[
\int \Delta \varphi \Gamma (f)^{\frac{p}2}-p\int \varphi \Gamma( f)^{\frac{p}2-1} \Gamma(\Delta f, f)\geq pK\int \varphi \Gamma(f)^{\frac{p}2}.
\]
In particular, from Lemma 2.6 and Lemma 3.2 in \cite{S-S}  we know $\Gamma(f)^{\frac{p}2} \in {\rm D}({\bf \Delta})$ and
\begin{equation}\label{eq:th1-2}
\frac1{p}{\bf \Delta} \Gamma (f)^{\frac{p}2}-\Gamma( f)^{\frac{p}2-1} \Gamma(\Delta f, f)\d \mm\geq K\Gamma(f)^{\frac{p}2}\d\mm.
\end{equation}

By Lemma \ref{lemma-1},   we get that
\begin{equation}\label{eq:th1-3}
\frac12\Gamma(f){ \Delta }_{ac}\Gamma(f)+(\frac{p}4-\frac12) \Gamma(\Gamma(f)) \geq  \Gamma(f)\Gamma(\Delta f, f)+K\Gamma(f)^2
\end{equation}
holds $\mm$-a.e., and $\overline{\Gamma(f)}{\bf \Delta }_{sing}\Gamma(f)\geq 0$.

From now on, all the  inequalities are considered in $\mm$-a.e. sense.
We denote  $\frac12{ \Delta }_{ac}\Gamma(f)- \Gamma(\Delta f, f)$ by $\Gamma_2(f)$, and  $\frac12{ \Delta }_{ac}\Gamma(f)- \Gamma(\Delta f, f)-K\Gamma(f)$ by $\Gamma_{2,K}(f)$, then \eqref{eq:th1-3} becomes 
\[
\Gamma_{2,K}(f)\Gamma(f)+(\frac{p}4-\frac12) \Gamma(\Gamma(f)) \geq 0.
\]
For any real number $r\geq 0$, we say that the property $B(r)$ holds if 
\[
\Gamma_{2,K,r}(f):=\Gamma_{2,K}(f)\Gamma(f)+r \Gamma(\Gamma(f)) \geq 0
\]
for any $f\in {\rm TestF}$.
For example, \eqref{eq:th1-3} means $B(\frac{p}4 -\frac12)$.

Now we define 
\[
P(r)=r-\frac1{4(r+1)}.
\]
Then we will prove that  $B(r)$ implies $B(P(r))$. 
We choose the smooth function $\Phi:\R^3 \mapsto \R$ defined by
\[
\Phi({\bf f}):=\lambda f_1+(f_2-a)(f_3-b)-ab,~~a,b,\lambda \in \R.
\]
Then we know 
\begin{eqnarray*}
\Phi_{23}({\bf f})&=&\Phi_{32}= a, ~~~~~~~~~~~~~~~~\Phi_{ij}({\bf f})= 0,~~~~\text {if}~(i,j)\notin \{(2,3), (3,2)\}\\
\Phi_1({\bf f}) &=& \lambda,~~~~~~~~\Phi_2({\bf f}) = f_3-b,~~~~~~~~~~\Phi_3({\bf f}) =f_2-a.
\end{eqnarray*}

If ${\bf f}:=(f,g,h) \in {\mathcal{A}}^3$, we know $\Phi({\bf f}) \in {\mathcal{A}}$ by Lemma \ref{lemma:cv}. Hence we know 
\begin{equation}\label{eq:th1-4}
\Gamma_{2,K}(\Phi({\bf f}))\Gamma(\Phi({\bf f}))+r \Gamma(\Gamma(\Phi({\bf f}))) \geq 0.
\end{equation}

By direct computation using Lemma \ref{lemma:chain} (see also Theorem 3.4, \cite{S-S}), we have
\begin{eqnarray*}
\Gamma(\Phi({\bf f}))&=& g^{ij}\Phi_i\Phi_j({\bf f})\\
&=& \lambda^2 \Gamma(f)+ (g-a)A_1+(h-b)B_1
\end{eqnarray*}
where $g^{ij}=\Gamma(f_i,f_j)$, $A_1, A_2$ are some additional terms.

Similarly, we have
\begin{eqnarray*}
\Gamma(\Gamma(\Phi({\bf f})))&=& \Gamma(g^{ij}\Phi_i\Phi_j({\bf f}))\\
&=& (g^{ij})^2\Gamma(\Phi_i\Phi_j)+(\Phi_i\Phi_j)^2\Gamma(g^{ij})+2g^{ij}\Phi_i\Phi_j\Gamma(g^{ij},\Phi_i\Phi_j)\\
&=& (g^{ij})^2\Big[  \Phi_i^2\Gamma(\Phi_j)+\Phi_j^2\Gamma(\Phi_i)+2\Phi_i\Phi_j\Gamma(\Phi_i, \Phi_j)\Big]\\
&&~+(\Phi_i\Phi_j)^2\Gamma(g^{ij})+2g^{ij}\Phi_i\Phi_j\Gamma(g^{ij}, \Phi_i\Phi_j)\\
&=&2(g^{12})^2\lambda^2\Gamma(h)+2(g^{13})^2\lambda^2\Gamma(g)+\lambda^4\Gamma(g^{11})+(g-a)A_2+(h-b)B_2\\
&=&2\Gamma(f,g)^2\lambda^2\Gamma(h)+2\Gamma(f,h)^2\lambda^2\Gamma(g)+\lambda^4\Gamma(\Gamma(f))+(g-a)A_2+(h-b)B_2.
\end{eqnarray*}

We also know (see Theorem 3.4, \cite{S-S} or Lemma 3.3.7, \cite{G-N}) that
\begin{eqnarray*}
\Gamma_2({\bf f})-K\Gamma(\Phi({\bf f}))&=& \lambda^2\Gamma_2(f)+4\lambda\Hess [f](g,h)+2\Big(\Gamma(g)\Gamma(h)+\Gamma(g,h)^2 \Big)\\
&+& (g-a)A_3+(h-b)B_3 -K\lambda^2 \Gamma(f).
\end{eqnarray*}

Combining the computations above, \eqref{eq:th1-4} becomes an  inequality with parameters $a, b, \lambda$. By locality of weak gradients and density of simple functions, we can replace $b$  by $h$ and replace  $a$ by $g$ (similar arguments are used in Theorem 3.4 \cite{S-S} and Lemma 3.3.7 \cite{G-N}). Then we obtain the following inequality from \eqref{eq:th1-4}
\begin{eqnarray*}
&&\lambda^2 \Gamma(f)\left[\lambda^2\Gamma_2(f)+4\lambda\Hess [f](g,h)+2\Big(\Gamma(g)\Gamma(h)+\Gamma(g,h)^2 \Big)  -K\lambda^2 \Gamma(f)\right]\\&+&
r\left[2\Gamma(f,g)^2\lambda^2\Gamma(h)+2\Gamma(f,h)^2\lambda^2\Gamma(g)+\lambda^4\Gamma(\Gamma(f))\right]\\
&\geq&0.
\end{eqnarray*}
Since $r \geq 0$ and 
\[
\Gamma(g)\Gamma(h) \geq \Gamma(g,h)^2,
\]
we know
\begin{eqnarray*}
&&\Gamma(f)\left[\lambda^2\Gamma_2(f)+4\lambda\Hess [f](g,h)+4\big(\Gamma(g)\Gamma(h) \big)  -K\lambda^2 \Gamma(f)\right]\\&+&
r\left[4\Gamma(f)\Gamma(g)\Gamma(h)+\lambda^2\Gamma(\Gamma(f))\right]\\
&\geq&0.
\end{eqnarray*}
Then we have
\[
(\Gamma_2(f)\Gamma(f)+r\Gamma(\Gamma(f))-K\Gamma(f)^2)\lambda^2+4\lambda\Gamma(f)\Hess [f](g,h)+4(r+1)\Gamma(f)\Gamma(g)\Gamma(h) \geq 0.
\]

Applying Lemma \ref{lemma:delta} we obtain
\[
(1+r)\Gamma_{2,K,r}\Gamma(f)\Gamma(g)\Gamma(h) \geq \Gamma(f)^2\Hess [f](g,h).
\]
Since $B(r)$ means $\Gamma_{2,K,r} \geq 0$, this inequality is equivalent to
\begin{equation}\label{eq:th1-5}
(1+r)\Gamma_{2,K,r}(f)\Gamma(g)\Gamma(h) \geq \Gamma(f)\Hess [f](g,h).
\end{equation}

Recall that $2\Hess{[f]}( g,h)=\Gamma ( g,  \Gamma ( f,  h )) +\Gamma(h,  \Gamma(f, g))-\Gamma( f, \Gamma( g, h))$, we know
\[
\Hess[f](g,h)+\Hess[g](f,h)=\Gamma(\Gamma(f,g),h).
\]
Combining with inequality \eqref{eq:th1-5} we have
\begin{eqnarray*}
\sqrt{\frac{1}{1+r}} \Gamma(\Gamma(f,g),h) \sqrt{\Gamma(f)} &\leq& \sqrt{\Gamma_{2,K,r}(f)\Gamma(g)\Gamma(h) }+\sqrt{\Gamma_{2,K,r}(g)\Gamma(f)\Gamma(h)}\\
&=& \Big ( \sqrt{\Gamma_{2,K,r}(f)\Gamma(g) }+\sqrt{\Gamma_{2,K,r}(g)\Gamma(f) }\Big)\sqrt{\Gamma(h)}.
\end{eqnarray*}

Then we fix $f, g \in {\mathcal{A}}$, and approximate any $h \in W^{1,2} \cap L^\infty$ with a sequence $(h_n) \subset {\mathcal{A}}$ converging to $h$ strongly in $W^{1,2}$ such that 
\[
\Gamma(h_n) \to \Gamma(h), ~~~\Gamma(h_n, \Gamma(f,g)) \to \Gamma(h, \Gamma(f,g))
\]
pointwise and in $L^1(X, \mm)$. Thus we can replace $h$ by $\Gamma(f,g)$ in the last inequality and obtain
\begin{equation}\label{eq:th1-6}
\sqrt{\frac{1}{1+r}}\sqrt{ \Gamma(\Gamma(f,g)) \Gamma(f)}=\Big ( \sqrt{\Gamma_{2,K,r}(f)\Gamma(g) }+\sqrt{\Gamma_{2,K,r}(g)\Gamma(f) }\Big).
\end{equation}

Let $g=f$ in \eqref{eq:th1-6}, we obtain
\[
\frac1{1+r}\Gamma(\Gamma(f))\Gamma(f) \leq 4\Gamma_{2,K,r}(f) \Gamma(f).
\]
Therefore,
\[
(\frac14\frac1{1+r}-r)\Gamma(\Gamma(f))\Gamma(f) \leq \Gamma_{2,K}(f) \Gamma(f).
\]
In other words, we have $B(P(r))$.

From Lemma \ref{lemma-2} we know there exists $a_0 \geq \frac{p}4-\frac12$ and $N_0 \in \N$ such that $a_{N_0}=\epsilon$, where $a_{n+1}=P(a_n)$, $n=0,...,N_0-1$. Then
 we know $B(a_0)$ from \eqref{eq:th1-3}. From  the result above, we can see that $B(a_{N_0})$ holds by induction. So we prove \eqref{eq:th1-1.5}.

 {\bf Part 2.}
From \eqref{eq:th1-1.5} and Lemma \ref{lemma-1} we know 
 \begin{equation}\label{eq:th1-6.5}
 \frac1{p_n}{\bf \Delta} \Gamma (f)^{\frac{p_n}2}-\Gamma( f)^{\frac{p_n}2-1} \Gamma(\Delta f, f)\, \mm\geq K\Gamma(f)^{\frac{p_n}2}\,\mm
 \end{equation}
for any $p_n=2+\frac{1}{2^{n}}$,   $n\in \N$.

 Let  $f\in \mathcal{A}$,  $\varphi\in {\rm TestF}$ and $\varphi \geq 0$.  From \eqref{eq:th1-6.5} we know
 \[
 \int \frac1{p_n} \Delta\varphi \Gamma (f)^{\frac{p_n}2}\,\d\mm-\int \varphi\Gamma( f)^{\frac{p_n}2-1} \Gamma(\Delta f, f)\d \mm\geq K\int\varphi\Gamma(f)^{\frac{p_n}2}\d\mm.
 \]

Letting $n\to \infty$,  by dominated convergence theorem and monotone convergence theorem we know 
\begin{equation}\label{eq:th1-7}
\frac 12 \int  \Delta\varphi \Gamma (f)\,\d\mm-\int \varphi \Gamma(\Delta f, f)\,\d \mm \geq K\int\varphi\Gamma(f)\,\d\mm.
\end{equation}

Combining with the density of $\mathcal{A}$ in ${\rm TestF}$, we know \eqref{eq:th1-7} holds for all $f\in {\rm TestF}$.

Finally, by Theorem 4.17  \cite{AGS-B} we know that $\ms$ is a $\rcd$ space.
\end{proof}

\bigskip

As a corollary, we have the following proposition. We recall (see \cite{G-N}) that the measure-valued Ricci tensor on ${\rm RCD}$ metric measure space is defined as
\[
\bRic(\nabla f, \nabla f):={\bf \Gamma}_2(f)-|\Hess[f]|^2_{\rm HS}\,\mm,
\]
where ${\bf \Gamma}_2(f):=\frac12 {\bf \Delta} \Gamma(f)-\Gamma( f,  \Delta f)\,\mm$ and $|\Hess[f]|_{\rm HS}$ is the minimal $L^2$ function $G$ such that $|\sum_{i,j}\Hess[f](g_i,h_j)|\leq G\sqrt{\sum_{i,j}\Gamma^2(g_i, h_j)}$ for any $(g_i), (h_j) \subset {\rm TestF}$ (see \cite{G-N} and \cite{S-S} for details). It is proved that $\bRic$ is well defined for any $f\in {\rm TestF}\ms$ when $\ms$ is ${\rm RCD}$.
\begin{proposition}\label{prop-last}
Let $\ms$ be a {\rm RCD} space. Then the following characterizations are equivalent.
\begin{itemize}
\item [1)] $\ms$ is $\rcd$,
\item [2)] for any test function $f\in {\rm TestF}$ we have 
   $\bRic(\nabla f, \nabla f)\geq K|\D f|^2\,\mm$ in the sense that 
   \[
   \Ric_{ac}(\nabla f, \nabla f) \geq  K|\D f|^2~~~\mm-\text{a.e.}
   \]
   and $\bRic_{sing}(\nabla f, \nabla f) \geq 0$.
\item [3)] for any test function $f\in {\rm TestF}$ we have  
$$|\D f|^2  \Ric_{ac}(\nabla f, \nabla f) \geq  K|\D f|^4~~~\mm-\text{a.e.}$$  and $\bRic_{sing}(\nabla f, \nabla f) \geq 0$.
\end{itemize}

\end{proposition}

\begin{proof}
1) $\Rightarrow$ 2) is Lemma 3.6.2  \cite{G-N}, 2) $\Rightarrow$ 3) is trivial. So we just need to prove 3) $\Rightarrow$ 1). 

From 3) we know $\Gamma_{2,K,0}(f) \geq 0$, $\mm$-a.e. for any $f\in {\rm TestF}$. Therefore $\Gamma_{2,K,r}(f) \geq 0$  for any $r>0$. Using the same argument as   in the proof of Theorem \ref{th-improvedBE}, we know $\ms$ is $\rcd$.
\end{proof}


\def\cprime{$'$}

\bigskip

Bang-Xian Han, Institute for applied mathematics, University of Bonn

 Endenicher Allee 60 , D-53115 Bonn, Germany
 
{ Email}: han@iam.uni-bonn.de

\end{document}